\newtheorem{theorem}{Theorem}
\newtheorem{prop}[theorem]{Proposition}
\newtheorem{defn}{Definition}\numberwithin{defn}{section}
\theoremstyle{remark}
\newtheorem{remark}[theorem]{Remark}
\newtheorem{example}[theorem]{Example}
\numberwithin{equation}{section}
\def\abs#1{\lvert#1\rvert}
\long\def\comment#1{}
\def\VN#1{\overset{\rlap{$#1$}}{\V}}
\def\EB#1#2{\rlap{\raisebox{-0.3ex}{#2}}\raisebox{0.3ex}{#1}}
\def\EDV#1#2{\rlap{\rotatebox{10}{#1\V}}\rotatebox{-10}{#2\V}}
\def\EDVN#1#2#3#4{\rlap{\rotatebox{10}{#1\rotatebox{-10}{$\overset{\rlap{$#3$}}{\V}$}}}\rotatebox{-10}{#2\rotatebox{10}{$\underset{\rlap{$#4$}}{\V}$}}}
\def\V{{\setlength{\unitlength}{1pt}\begin{picture}(4,4)\put(2,2){\circle{4}}\end{picture}}}
\def\E{{\setlength{\unitlength}{1pt}\begin{picture}(38,2)\put(0,2){\line(1,0){38}}\end{picture}}}
\def\EO{{\setlength{\unitlength}{1pt}\begin{picture}(38,6)\put(0,2){\line(1,0){16}}\put(22,2){\line(1,0){16}}\put(16,4){\oval(4,4)[l]}\put(16,0){\oval(4,4)[r]}\put(22,4){\oval(4,4)[l]}\put(22,0){\oval(4,4)[r]}\end{picture}}}
\title[Functional relations for zeta-functions of root systems]{An overview and supplements to the theory of functional relations for zeta-functions of root systems}
\author[Y. Komori]{Yasushi Komori}
\author[K. Matsumoto]{Kohji Matsumoto}
\author[H. Tsumura]{Hirofumi Tsumura}
\address{Y. Komori: Department of Mathematics, Rikkyo University, Nishi-Ikebukuro, Toshima-ku, Tokyo 171-8501, Japan}
\email{komori@rikkyo.ac.jp}
\address{K. Matsumoto: Graduate School of Mathematics, Nagoya University, Chikusa-ku, Nagoya 464-8602, Japan}
\email{kohjimat@math.nagoya-u.ac.jp}
\address{{H.\,Tsumura:} Department of Mathematical Sciences, Tokyo Metropolitan University, 1-1, Minami-Ohsawa, Hachioji, Tokyo 192-0397, Japan}
\email{tsumura@tmu.ac.jp}
\subjclass[2010]{Primary 11M41, Secondary 11B68, 11F27, 11M32, 11M99}
\keywords{Zeta-functions of root systems, Functional relations, 
Weyl groups, Bernoulli functions}
\begin{document}

\begin{abstract}
We give an overview of the theory of functional relations for zeta-functions of root
systems, and show some new results on functional relations involving zeta-functions
of root systems of types $B_r$, $D_r$, $A_3$ and $C_2$.
To show those new results, we use two different methods.    
The first method, for $B_r$, $D_r$, $A_3$, is via generating functions, 
which is based on the symmetry with respect to Weyl groups, or more generally, on our
theory of lattice sums of certain hyperplane arrangements.
The second method for $C_2$ is more elementary, using partial fraction decompositions.
\end{abstract}

\maketitle


\section{Introduction}\label{sec-1}

Let $\mathbb{N}$ be the set of positive integers, $\mathbb{N}_0$ the set of
non-negative integers, $\mathbb{Z}$ the set of rational integers, $\mathbb{R}$ the set of real numbers, and $\mathbb{C}$ the set of complex numbers. 

Let $V$ be an $r$-dimensional real vector space with the inner product
$\langle\;,\;\rangle$, and $\Delta\subset V$ be a reduced root system of rank $r$.
Let $\Psi=\{\alpha_1,\ldots,\alpha_r\}$ be its fundamental system.
Denote by $\Delta_+, \Delta_-$ the set of
all positive roots and of all negative roots, respectively, and $n=|\Delta_+|$.    
For any $\alpha\in\Delta$, we denote by $\alpha^{\vee}$ the associated coroot.

Let $\Lambda=\{\lambda_1,\ldots,\lambda_r\}$ be the set of fundamental weights
defined by $\langle\alpha_i^{\vee},\lambda_j\rangle=\delta_{ij}$ (Kronecker's delta).
Let $Q^{\vee}$ be the coroot lattice,
$P$ the weight lattice, $P_+$ the set of integral dominant weights, and
$P_{++}$ the set of integral strongly dominant weights, respectively, defined 
by
\begin{align*}
&Q^{\vee}=\bigoplus_{i=1}^r \mathbb{Z}\alpha_i^{\vee},\quad
P=\bigoplus_{i=1}^r\mathbb{Z}\lambda_i,\\
&P_+=\bigoplus_{i=1}^r\mathbb{N}_0\lambda_i,\quad
P_{++}=\bigoplus_{i=1}^r\mathbb{N}\lambda_i.
\end{align*}

The zeta-function of the root system $\Delta$
is defined by an $r$-ple series in $n$ variables.
It is defined by
\begin{align}\label{def_zeta}
\zeta_r({\bf s};\Delta)=\sum_{m_1=1}^{\infty}\cdots\sum_{m_r=1}^{\infty}
\prod_{\alpha\in\Delta_+}\langle\alpha^{\vee},m_1\lambda_1+\cdots+m_r\lambda_r\rangle
^{-s_{\alpha}},
\end{align}
where ${\bf s}=(s_{\alpha})_{\alpha\in\Delta_+}$ is a complex vector.
This multiple series is convergent absolutely when $\Re s_{\alpha}>1$ for all $\alpha$, 
and can be continued meromorphically to the whole space $\mathbb{C}^n$ 
(see \cite[Theorem 3]{MatBonn}).
When the root system is of type $X_r$ ($X=A,B,C,D,E,F$ or $G$), 
we write the associated zeta-function as $\zeta_r({\bf s};X_r)$.

When $X=A,B,C$ and $D$, the explicit form of \eqref{def_zeta} was given in
\cite{KMTWitten2}, where the recursive structure of those zeta-functions was also
discussed.    The case of type $G_2$ was studied in \cite{KMTWitten4,KMTWitten5}.

The notion of zeta-functions of root systems is a generalization of the following three
kinds of multiple zeta-functions.

$1^{\circ}$.
It is a multi-variable generalization of Witten zeta-functions.
Let $\mathfrak{g}$ be a semisimple Lie algebra, and define
\begin{align}\label{def_Witten}
\zeta_W(s,\mathfrak{g})=\sum_{\varphi}(\dim\varphi)^{-s},
\end{align}
where the sum runs over all equivalent classes of finite dimensional irreducible
representations of $\mathfrak{g}$.  
Zagier \cite{Zagier} introduced \eqref{def_Witten} under the name of the Witten
zeta-function, after the work of Witten \cite{Witten}.
Since there is a one-to-one correspondence between irreducible representations and
dominant weights, using Weyl's dimension formula we find that
\begin{align}\label{Witten-root}
\zeta_W(s,\mathfrak{g})&=K(\mathfrak{g})^s 
\sum_{m_1=1}^{\infty}\cdots\sum_{m_r=1}^{\infty}
\prod_{\alpha\in\Delta_+}\langle\alpha^{\vee},m_1\lambda_1+\cdots+m_r\lambda_r\rangle
^{-s}\\
&=K(\mathfrak{g})^s\zeta_r(s,\ldots,s;\Delta(\mathfrak{g})),\notag
\end{align}
where $K(\mathfrak{g})=\prod_{\alpha\in\Delta_+}\langle\alpha^{\vee},\lambda_1+\cdots+\lambda_r\rangle$ and
$\Delta(\mathfrak{g})$ is the root system corresponding to $\mathfrak{g}$.

\begin{remark}
Witten originally considered the zeta values associated not with Lie algebras, but with
Lie groups.     Multi-variable version of such zeta-functions associated with Lie groups
has been studied in \cite{KMTPalanga,KMTKyiv}.
\end{remark}

$2^{\circ}$.
Tornheim \cite{Tornheim} considered special values of the double series
\begin{align}\label{def_Tornheim}
\zeta_{MT,2}(s_1,s_2,s_3)=\sum_{m_1=1}^{\infty}\sum_{m_2=1}^{\infty}
m_1^{-s_1}m_2^{-s_2}(m_1+m_2)^{-s_3}
\end{align}
at positive integer points.    From our viewpoint, this function is nothing but the
zeta-function of the root system of type $A_2$.    Later, the second author
\cite{MatBonn} introduced the $C_2$-analogue, that is
\begin{align*}
&\zeta_2(s_1,s_2,s_3,s_4;C_2)\\
&\quad =\sum_{m_1=1}^{\infty}\sum_{m_2=1}^{\infty}
m_1^{-s_1}m_2^{-s_2}(m_1+m_2)^{-s_3}(m_1+2m_2)^{-s_4},\notag
\end{align*}
and then the second and the third authors \cite{MTFourier} considered more general
zeta-functions of root systems of type $A_r$, which are of the form
\begin{align}\label{def_A_r}
\zeta_r({\bf s};A_r)=\sum_{m_1=1}^{\infty}\cdots\sum_{m_r=1}^{\infty}
\prod_{1\leq i<j\leq r+1}(m_i+\cdots+m_{j-1})^{-s_{ij}},
\end{align}
where ${\bf s}=(s_{ij})_{1\leq i<j\leq r+1}$.
On the other hand, as another generalization of \eqref{def_Tornheim}, the second author
\cite{MatBonn} also introduced the Mordell--Tornheim multiple zeta-function
\begin{align}\label{def_MT}
& \zeta_{MT,r}(s_1,\ldots,s_r,s_{r+1})\\
& \quad =\sum_{m_1=1}^{\infty}\cdots\sum_{m_r=1}^{\infty} m_1^{-s_1}\cdots m_r^{-s_r}(m_1+\cdots+m_r)^{-s_{r+1}}\notag
\end{align}
(Mordell \cite{Mordell} considered the special case $s_1=\cdots=s_r=s_{r+1}=1$ in
\eqref{def_MT}, with an additional constant term on the last factor).
This class of multiple zeta-functions has also been studied by several 
mathematicians recently.
We can see that \eqref{def_A_r} is a generalization of \eqref{def_MT}, because if we
put $s_{ij}=0$ for all $(i,j)\neq (1,2),(2,3),\ldots,(r,r+1), (1,r+1)$ in \eqref{def_A_r},
then it reduces to \eqref{def_MT}.

$3^{\circ}$.
The Euler--Zagier multiple zeta-function is defined by
\begin{align}\label{def_EZ}
& \zeta_{EZ,r}(s_1,\ldots,s_r)\\
& \quad =\sum_{m_1=1}^{\infty}\cdots\sum_{m_r=1}^{\infty}
m_1^{-s_1}(m_1+m_2)^{-s_2}\cdots(m_1+\cdots+m_r)^{-s_r}\notag
\end{align}
(Hoffman \cite{Hoffman}, Zagier \cite{Zagier}).
Special values of \eqref{def_EZ} at positive integer points (in the domain of absolute
convergence) are called multiple zeta values (MZV), and have been studied extensively
in these decades.     We find that, if we put 
$s_{ij}=0$ for all $(i,j)\neq (1,2),(1,3),\ldots,(1,r+1)$ in \eqref{def_A_r},
then it reduces to \eqref{def_EZ}.    It is also possible to understand \eqref{def_EZ}
as special cases of zeta-functions of root systems of type $C_r$ (see \cite{KMTFACM}).

From the above observation we can expect that we will be able to construct a unified
theory of multiple zeta-functions from the viewpoint of root systems.
This expectation is embodied when we consider the problem of functional relations, which
we now explain.

In the study of MZV, a central problem is to find various relations among those values.
In fact, a lot of such relations are known (duality, sum formula, Ohno relation,
Le--Murakami relation, \dots).
Then it is a natural question to ask: whether those relations are valid only at integer 
points, or valid also at other values continuously as functional relations?
This question was raised by the second author (cf. \cite[p.161]{Mat06}).
In the frame of Euler--Zagier multiple zeta-functions, no such functional relation
is known, except for the classical harmonic product formula
(and perhaps the functional equations for double zeta-functions, see \cite{Mat04},
\cite{KMTDebrecen,KMTIJNT}).
However, if we observe the landscape from the wider standpoint of zeta-functions of
root systems, we are able to find various functional relations whose specialization
gives relations among MZVs.
The aim of the present article is to survey the known results on functional relations
among zeta-functions of root systems, and report some new results in this direction.

In the next section, we summarize various previous results on this topic.
In particular we mention the main general result
(Theorem \ref{thm:main1}) in \cite{KMT-Poincare}.
Then in Sections 3 to 5, we give some explicit examples of this Theorem \ref{thm:main1}
in the cases of types $B_r$, $D_r$, and $A_3$.
In the last section we present another method of obtaining functional relations,
for zeta-functions of type $C_2$.

\section{A survey on previous methods} \label{sec-2}

The first affirmative answer to the question mentioned at the end of the introduction 
is the following result of the third author \cite{Tsu07}. Let $\zeta(s)$ be the Riemann zeta-function and let
$$\zeta_{2}(s_1,s_2,s_3;A_2)=\zeta_{MT,2}(s_1,s_2,s_3)$$
defined by \eqref{def_Tornheim}. Then, for $k,l\in\mathbb{N}_0$, 
\begin{align}\label{Tsumura_formula}
\zeta_2(k,l,s;A_2)+(-1)^k\zeta_2(k,s,l;A_2)+(-1)^l\zeta_2(l,s,k;A_2)\\
=2\sum_{j=0}^{[k/2]}\binom{k+l-2j-1}{l-1}\zeta(2j)\zeta(s+k+l-2j)\notag\\
+2\sum_{j=0}^{[l/2]}\binom{k+l-2j-1}{k-1}\zeta(2j)\zeta(s+k+l-2j)\notag
\end{align}
holds for any $s\in\mathbb{C}$.

The above formula is actually a little different from the form given in \cite{Tsu07}.
Inspired by \cite{Tsu07}, Nakamura \cite{Nak06} published an alternative method, which 
gives the above form.     From the expression in \cite{Tsu07}, it is possible to deduce
the above form, just by using a certain elementary lemma (see \cite[Lemma 2.1]{MNOT}).

The basic idea in \cite{Tsu07} is to consider the series with additional factor
$(-u)^{-n}$, where $u\geq 1$ and $n\in\mathbb{N}$.    Because of this additional factor, 
the series is convergent nicely.   And at the end of the proof take the limit $u\to 1$
to obtain the relation of ordinary multiple zeta-functions.    This method is sometimes
called the $u$-method.
Using the same method, the second and the third authors \cite{MTFourier} proved
some functional relations involving the zeta-function of $A_3$.

Nakamura's method in \cite{Nak06} is different.    His argument starts with the expression
\begin{align}\label{Nak_fund}
\sum_{\substack{m_1\neq 0, m_2\neq 0 \\ m_1+m_2\neq 0}}
\lefteqn{m_1^{-s_1}m_2^{-s_2}(m_1+m_2)^{-s_3}}\\
&=\int_0^1 \sum_{m_1\neq 0}\frac{e^{2\pi im_1 x}}{m_1^{s_1}}
\sum_{m_2\neq 0}\frac{e^{2\pi im_2 x}}{m_2^{s_2}}
\sum_{n\neq 0}\frac{e^{-2\pi in x}}{n^{s_3}}dx,\notag
\end{align}
which was used by Zagier in his lecture at Kyushu University in 1999, and then uses
properties of Bernoulli polynomials.
By the same technique, Nakamura \cite{Nak08} obtained functional relations among
zeta-functions of $B_2$, $A_3$, and of $A_2$ with characters.

Yet another (rather elementary) method was proposed by Zhou, Bradley and Cai \cite{ZBC}, 
who stated certain
functional relations among zeta-functions of $A_3$.     Inspired by \cite{ZBC}, Ikeda
and Matsuoka \cite{IM} gave functional relations among zeta-functions of
$A_2, A_3$ and $A_4$ (with correcting some inaccurate point in \cite{ZBC}).

All of the methods mentioned above have the same feature, that is, in general, some of
the variables in the results are forced to be 0. 
To avoid this unsatisfactory restriction, the authors \cite{KMTKyushu} introduced the idea
of considering infinite series of polylogarithm type (that is, with an additional 
exponential factor on the numerator), and combining this additional flexibility
with the $u$-method to obtain
more general form of functional relations.   This technique was inspired by the work of
the second and the third authors \cite{MTSiauliai}.
Applying this idea, the authors proved various functional relations among zeta-functions of
$A_3, B_3, C_2, C_3$ and $G_2$ (see \cite{KMTNicchuu,KMTWitten4,KMTWitten5}).

It is noted that the second and the third authors \cite{MTQJM} introduced another new method of finding functional relations among certain multiple zeta-functions including the zeta-function of type $A_2$. This method can be regarded as a kind of multiple analogue of Hardy's one (see \cite{Hardy}, also \cite[Section 2.2]{Titch}) of proving the functional equation for the Riemann zeta-function. However, at present, it is unclear whether we can apply this method to zeta-functions of root systems of general types.

Another approach to functional relations for the zeta-function of type $A_2$, due to
Onodera \cite{Onod14} is also to be mentioned.

On the other hand, when one observes the form of the left-hand side of
\eqref{Tsumura_formula}, one may feel that there may be some underlying connection with
the action of the associated Weyl group.    This is in fact true, and the authors
developed an alternative (more structural) approach of finding functional relations.

Let $I\subset\{1,2,\ldots,r\}$, and $\Psi_I=\{\alpha_i \;|\; i\in I\}\subset \Psi$.
Let $V_I$ be the subspace of $V$ spanned by $\Psi_I$.    Then $\Delta_I=\Delta\cap V_I$
is the root system in $V_I$ whose fundamental system is $\Psi_I$.
For $\Delta_I$, we denote the corresponding coroot lattice, weight lattice etc.\ by 
$Q_I^{\vee}=\bigoplus_{i\in I}\mathbb{Z}\alpha_i^{\vee}$,
$P_I=\bigoplus_{i\in I}\mathbb{Z}\lambda_i$ etc. 
Let $\iota:Q_I^{\vee}\to Q^{\vee}$ be the natural embedding, 
and $\iota^*:P\to P_I$ the projection induced from $\iota$; that is, for 
$\lambda\in P$, $\iota^*(\lambda)$ is defined as a unique element of $P_I$ satisfying
$\langle\iota(q),\lambda\rangle=\langle q,\iota^*(\lambda)\rangle$ for all
$q\in Q_I^{\vee}$.

Let $\sigma_{\alpha}$ be the reflection with respect to $\alpha$, and 
denote by $W=W(\Delta)$ the Weyl
group of $\Delta$, namely the group generated by $\{\sigma_{i}\;|\;1\leq i\leq r\}$,
where $\sigma_i=\sigma_{\alpha_i}$.
For $w\in W$, we put $\Delta_w=\Delta_+\cap w^{-1}\Delta_-$.
Let $W_I$ be the subgroup of $W$ generated by all the reflections associated with the elements
in $\Psi_I$, and
$W^I=\{w\in W\;|\; \Delta_{I+}^{\vee}\subset w\Delta_+^{\vee}\}$.

The fundamental Weyl chamber is defined by
$$
C=\{v\in V\;|\; \langle\alpha_i^{\vee},v\rangle\geq 0\;{\rm for}\;1\leq i\leq r\}.
$$
Then $W$ acts on the set of Weyl chambers $\{wC\;|\;w\in W\}$ simply transitively.
For any subset $A\subset\Delta$, 
let $H_{A^{\vee}}$ be the set of all
$v\in V$ which satisfies $\langle\alpha^{\vee},v\rangle= 0$ for some $\alpha\in A$.  
In particular, $H_{\Delta^{\vee}}$ is the set of all walls of Weyl chambers.

For $\mathbf{s}=(s_{\alpha})_{\alpha\in \Delta_+}\in\mathbb{C}^n$, 
define the action of $W$ to $\mathbf{s}$ 
by $(w\mathbf{s})_{\alpha}=s_{w^{-1}\alpha}$ for $w\in W$ with the convention 
that, if $\alpha\in\Delta_-$, then we understand that $s_{\alpha}=s_{-\alpha}$.
Define
\begin{align}\label{2-1}
S(\mathbf{s},\mathbf{y};I;\Delta)=\sum_{\lambda\in \iota^{*-1}(P_{I+})\setminus 
  H_{\Delta^{\vee}}}
  e^{2\pi\sqrt{-1}\langle \mathbf{y},\lambda\rangle}
  \prod_{\alpha\in\Delta_+}
  \frac{1}{\langle\alpha^\vee,\lambda\rangle^{s_\alpha}},
  \end{align}
where $\mathbf{y}\in V$.
This sum was first introduced in \cite[(110)]{KMTWitten3}.
Moreover in \cite[Theorems 5 and 6]{KMTWitten3}, we showed
\begin{align}\label{sahen}
S(\mathbf{s},\mathbf{y};I;\Delta)
    =
    \sum_{w\in W^I}
    \Bigl(\prod_{\alpha\in\Delta_{w^{-1}}}(-1)^{-s_{\alpha}}\Bigr)
    \zeta_r(w^{-1}\mathbf{s},w^{-1}\mathbf{y};\Delta).
\end{align}

Also in the same article we proved a certain multiple integral expression of
$S(\mathbf{s},\mathbf{y};I;\Delta)$, and noticed that when $I=\emptyset$ and $s_{\alpha}$ 
are positive integers, 
the integrand of that expression can be written in terms of classical Bernoulli 
polynomials.    
This observation led us to the definition of Bernoulli functions of root systems and their
generating functions.
However it requires extremely huge task if we want to
calculate the integral expression given in \cite{KMTWitten3} more explicitly.
This situation was improved in \cite{KMTLondon}, in which more accessible expressions were given when $I=\emptyset$ (see \cite[Theorem 4.1]{KMTLondon}).

To define Bernoulli functions of root systems, we need some more notations.

Let 
$\Delta^*=\Delta_+\setminus\Delta_{I+}$ and
$d=\abs{I^c}$.
We may find $\mathbf{V}_I=\{\gamma_1,\ldots,\gamma_d\}\subset \Delta^*$
such that $\mathbf{V}=\mathbf{V}_I\cup\Psi_I$ becomes a basis of $V$.
Let $\mathscr{V}_I=\mathscr{V}(\Delta^*)$ be the set of all such bases.
In particular, $\mathscr{V}=\mathscr{V}_{\emptyset}$ be the set of all
linearly independent subsets
$\mathbf{V}=\{\beta_1,\ldots,\beta_r\}\subset\Delta_+$.

For $\mathbf{V}\in\mathscr{V}_I$,
the lattice $L(\mathbf{V}^{\vee})=\bigoplus_{\beta\in\mathbf{V}}\mathbb{Z}\beta^{\vee}$
is a sublattice of $Q^{\vee}$.
Let $\{\mu^{\mathbf{V}}_\gamma\}_{\gamma\in\mathbf{V}}$ 
be the dual basis of $\mathbf{V}^\vee=\mathbf{V}_I^\vee\cup\Psi_I^\vee$,
namely
$\langle \gamma_k^{\vee},\mu^{\mathbf{V}}_{\gamma_l}\rangle=\delta_{kl}$,
$\langle \alpha_i^{\vee},\mu^{\mathbf{V}}_{\alpha_j}\rangle=\delta_{ij}$, and
$\langle\gamma_k^{\vee}, \mu^{\mathbf{V}}_{\alpha_i}\rangle
=\langle\alpha_i^{\vee}, \mu^{\mathbf{V}}_{\gamma_k}\rangle=0$.
Let $p_{\mathbf{V}_I^\perp}$ 
be the projection defined by
\begin{equation}
  \label{eq:proj}
  p_{\mathbf{V}_I^\perp}(v)=
  v-\sum_{\gamma\in\mathbf{V}_I}\mu^{\mathbf{V}}_\gamma\langle\gamma^\vee,v\rangle
  =
  \sum_{\alpha\in\Psi_I}\mu^{\mathbf{V}}_{\alpha}\langle\alpha^\vee,v\rangle,
\end{equation}
for $v\in V$.  

Next we introduce a generalization of the notion of ``fractional part'' of real numbers.
Let $\mathscr{R}$ be the set of all linearly independent subsets
$\mathbf{R}=\{\beta_1,\ldots,\beta_{r-1}\}\subset\Delta$, and let
$\mathfrak{H}_{\mathbf{R}^{\vee}}=\bigoplus_{i=1}^{r-1}\mathbb{R}\beta_i^{\vee}$
be the hyperplane passing through $\mathbf{R}^{\vee}\cup\{0\}$.    We fix a non-zero vector
$$
\phi\in V\setminus\bigcup_{\mathbf{R}\in\mathscr{R}}\mathfrak{H}_{\mathbf{R}^{\vee}}.
$$
Then $\langle\phi,\mu_{\beta}^{\mathbf{V}}\rangle\neq 0$ for all 
$\mathbf{V}\in\mathscr{V}$ and $\beta\in\mathbf{V}$.
For $\mathbf{y}\in V$, $\mathbf{V}\in\mathscr{V}$ and $\beta\in\mathbf{V}$, we define
\begin{align}
\{\mathbf{y}\}_{\mathbf{V},\beta}=\left\{
   \begin{array}{ll}
   \{\langle\mathbf{y},\mu_{\beta}^{\mathbf{V}}\rangle\},& 
        (\langle\phi,\mu_{\beta}^{\mathbf{V}}\rangle>0),\\
   1-\{-\langle\mathbf{y},\mu_{\beta}^{\mathbf{V}}\rangle\},
        & (\langle\phi,\mu_{\beta}^{\mathbf{V}}\rangle<0),
   \end{array}
   \right.
\end{align}
where $\{\cdot\}$ on the right-hand sides denotes the usual fractional part of real
numbers.

Using these notions, we now define Bernoulli functions of the root system $\Delta$
associated with $I$ and their generating functions.

\begin{defn}[{\cite[Definition 2.2]{KMT-Poincare}}]\  
For $\mathbf{t}_I=(t_{\alpha})_{\alpha\in\Delta^*}\in\mathbb{C}^n$ and
$\lambda\in P_I$, let
\begin{align}
  \label{eq:exp_F}
 & F(\mathbf{t}_I,\mathbf{y},\lambda;I;\Delta)\\
  &=
  \sum_{\mathbf{V}\in\mathscr{V}_I}
  \left(\prod_{\gamma\in \Delta^*\setminus\mathbf{V}_I}
  \frac{t_\gamma}
  {t_\gamma-\sum_{\beta\in\mathbf{V}_I}
    t_\beta\langle\gamma^\vee,\mu^{\mathbf{V}}_\beta\rangle
    -2\pi\sqrt{-1}
    \langle \gamma^\vee,p_{\mathbf{V}_I^\perp}(\lambda)\rangle}\right)\notag
  \\
  &\qquad\times
  \frac{1}{\abs{Q^\vee/L(\mathbf{V}^\vee)}}
  \sum_{q\in Q^\vee/L(\mathbf{V}^\vee)}
  \exp(2\pi\sqrt{-1}\langle \mathbf{y}+q,p_{\mathbf{V}_I^\perp}(\lambda)\rangle)\notag\\
  &\qquad\times
  \prod_{\beta\in\mathbf{V}_I}
  \frac{t_\beta\exp
    (t_\beta
    \{\mathbf{y}+q\}_{\mathbf{V},\beta})}{e^{t_\beta}-1},\notag
\end{align}
and define {\it Bernoulli functions} $P(\mathbf{k},\mathbf{y},\lambda;I;\Delta)$ 
{\it of the root system $\Delta$
associated with $I$} by the expansion
\begin{equation}
\label{eq:def_F}
  F(\mathbf{t}_I,\mathbf{y},\lambda;I;\Delta)=
  \sum_{\mathbf{k}\in \mathbb{N}_0^{\abs{\Delta^*}}}P(\mathbf{k},\mathbf{y},\lambda;I;\Delta)
  \prod_{\alpha\in \Delta^*}
  \frac{t_\alpha^{k_\alpha}}{k_\alpha!}.
\end{equation}
\end{defn}

\begin{theorem}[{\cite[Theorem 2.3]{KMT-Poincare}}]
\label{thm:main1}
Let $s_\alpha=k_\alpha\in\mathbb{N}$ for $\alpha\in \Delta^*$ and
$s_\alpha\in\mathbb{C}$ for $\alpha\in\Delta_{I+}$. 
We assume


{\rm (\#)} If $\alpha$ belongs to an irreducible component of type $A_1$,
 then the corresponding $k_{\alpha}\geq 2$.  

Then we have
\begin{equation}
  \label{eq:func_eq}
  \begin{split}
    &S(\mathbf{s},\mathbf{y};I;\Delta)
    =(-1)^{\abs{\Delta^*}}
    \biggl(\prod_{\alpha\in \Delta^*}
    \frac{(2\pi\sqrt{-1})^{k_\alpha}}{k_\alpha!}\biggr)
    \\
    &\qquad\times\sum_{\lambda\in P_{I++}}
    \biggr(
    \prod_{\alpha\in\Delta_{I+}}
    \frac{1}{\langle\alpha^\vee,\lambda\rangle^{s_\alpha}}
    \biggl)
    P(\mathbf{k},\mathbf{y},\lambda;I;\Delta).
  \end{split}
\end{equation}
\end{theorem}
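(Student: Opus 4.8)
The plan is to compute $S(\mathbf{s},\mathbf{y};I;\Delta)$ directly from its series definition \eqref{2-1} by carrying out the $\lambda$-summation in two stages: first fix the image $\mu=\iota^*(\lambda)\in P_{I+}$ of $\lambda$, then sum over the fiber. Since $\alpha^\vee$ lies in $Q_I^\vee$ for $\alpha\in\Delta_{I+}$, the pairing $\langle\alpha^\vee,\lambda\rangle=\langle\alpha^\vee,\mu\rangle$ depends on $\mu$ alone for such $\alpha$; moreover if $\mu\notin P_{I++}$ then $\langle\alpha_i^\vee,\mu\rangle=0$ for some $i\in I$, forcing $\langle\alpha_i^\vee,\lambda\rangle=0$ and hence $\lambda\in H_{\Delta^\vee}$, so that $\mu$ may be taken to range over $P_{I++}$. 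Thus \eqref{2-1} becomes
$$S(\mathbf{s},\mathbf{y};I;\Delta)=\sum_{\mu\in P_{I++}}\Bigl(\prod_{\alpha\in\Delta_{I+}}\langle\alpha^\vee,\mu\rangle^{-s_\alpha}\Bigr)\,G(\mathbf{k},\mathbf{y},\mu),$$
where
$$G(\mathbf{k},\mathbf{y},\mu)=\sum_{\substack{\lambda\in P,\ \iota^*(\lambda)=\mu\\ \lambda\notin H_{\Delta^\vee}}}e^{2\pi\sqrt{-1}\langle\mathbf{y},\lambda\rangle}\prod_{\alpha\in\Delta^*}\langle\alpha^\vee,\lambda\rangle^{-k_\alpha}.$$
Comparing this term by term with \eqref{eq:func_eq}, the theorem is reduced to the fiber identity
$$G(\mathbf{k},\mathbf{y},\mu)=(-1)^{\abs{\Delta^*}}\Bigl(\prod_{\alpha\in\Delta^*}\frac{(2\pi\sqrt{-1})^{k_\alpha}}{k_\alpha!}\Bigr)P(\mathbf{k},\mathbf{y},\mu;I;\Delta),$$
which is the general-$I$ analogue of \cite[Theorem 4.1]{KMTLondon}. (Via \eqref{sahen} this identity unpacks into functional relations among the zeta-functions themselves, but for the proof of Theorem \ref{thm:main1} only the identity above is needed.)

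To establish the fiber identity I would pass to generating functions: multiplying by $\prod_{\alpha\in\Delta^*}t_\alpha^{k_\alpha}/k_\alpha!$ and summing over $\mathbf{k}$, the right-hand side collapses to $F(\mathbf{t}_I,\mathbf{y},\mu;I;\Delta)$ by \eqref{eq:def_F}, and it remains to identify the generating function of $G(\mathbf{k},\mathbf{y},\mu)$ with the explicit formula \eqref{eq:exp_F}. Here I would follow, and extend to general $I$, the computation behind \cite[Theorem 4.1]{KMTLondon} (equivalently, start from the multiple integral representation of $S$ proved in \cite{KMTWitten3} and feed in the integer exponents $k_\alpha$, which turns the corresponding integrand factors into classical Bernoulli polynomials). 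The combinatorial heart is a triangulation of the $d$-dimensional fiber: the product $\prod_{\alpha\in\Delta^*}\langle\alpha^\vee,\lambda\rangle^{-k_\alpha}$ involves more linear forms than the fiber dimension, and one resolves it --- by partial fractions, i.e.\ the ``lattice sum of a hyperplane arrangement'' decomposition --- into a sum over the bases $\mathbf{V}\in\mathscr{V}_I$ (those $d$-subsets $\mathbf{V}_I\subset\Delta^*$ with $\mathbf{V}=\mathbf{V}_I\cup\Psi_I$ a basis of $V$) of products of genuine inverse powers along $\mathbf{V}_I$ against rational functions in the remaining forms. For a fixed $\mathbf{V}$ the free coordinates of $\lambda$ in the fiber are $(\langle\gamma^\vee,\lambda\rangle)_{\gamma\in\mathbf{V}_I}$, the component $p_{\mathbf{V}_I^\perp}(\lambda)$ depending only on $\mu$; since $L(\mathbf{V}^\vee)$ has finite index in $Q^\vee$, the constraint $\lambda\in P$ is picked out by the character average $\frac1{\abs{Q^\vee/L(\mathbf{V}^\vee)}}\sum_{q\in Q^\vee/L(\mathbf{V}^\vee)}\exp(2\pi\sqrt{-1}\langle\mathbf{y}+q,p_{\mathbf{V}_I^\perp}(\mu)\rangle)$ occurring in \eqref{eq:exp_F}. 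Finally, summing the now-separated one-dimensional series yields, for each $\beta\in\mathbf{V}_I$, the Bernoulli generating factor $t_\beta\exp(t_\beta\{\mathbf{y}+q\}_{\mathbf{V},\beta})/(e^{t_\beta}-1)$ via the Fourier expansion of periodic Bernoulli functions, and, for each $\gamma\in\Delta^*\setminus\mathbf{V}_I$, the rational factor $t_\gamma\big/\bigl(t_\gamma-\sum_{\beta\in\mathbf{V}_I}t_\beta\langle\gamma^\vee,\mu^{\mathbf{V}}_\beta\rangle-2\pi\sqrt{-1}\langle\gamma^\vee,p_{\mathbf{V}_I^\perp}(\mu)\rangle\bigr)$ via geometric summation; reassembling these factors over all $\mathbf{V}$ reproduces \eqref{eq:exp_F} verbatim, the overall $(-1)^{\abs{\Delta^*}}\prod_{\alpha}(2\pi\sqrt{-1})^{k_\alpha}/k_\alpha!$ being the bookkeeping residue of the normalization $\sum_{n\neq0}e^{2\pi\sqrt{-1}nx}n^{-k}=-(2\pi\sqrt{-1})^kB_k(x)/k!$ together with the partial-fraction steps.

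The obstacles are analytic and combinatorial rather than conceptual. On the analytic side, the rearrangements of the multiple series, the interchange with the $\mathbf{k}$-summation, and the legitimacy of discarding the wall locus $H_{\Delta^\vee}$ and of replacing $P_{I+}$ by $P_{I++}$ all rest on absolute convergence of the one-dimensional pieces $\sum_{n\neq0}e^{2\pi\sqrt{-1}nx}n^{-k_\alpha}$, which fails for $k_\alpha=1$; it is exactly to prevent this that hypothesis {\rm(\#)} demands $k_\alpha\geq2$ on roots spanning an $A_1$ component --- precisely the case in which no other linear form of $\Delta^*$ supplies compensating decay. The initial manipulations moreover require $\Re s_\alpha$ large for $\alpha\in\Delta_{I+}$, the final identity \eqref{eq:func_eq} then extending to all $(s_\alpha)_{\alpha\in\Delta_{I+}}\in\mathbb{C}^{\abs{\Delta_{I+}}}$ by analytic continuation in those variables (the remaining variables being frozen at positive integers). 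The genuinely delicate point is the combinatorial accounting of the triangulation: one must check that the decomposition over $\mathbf{V}\in\mathscr{V}_I$ covers every fiber lattice point with total weight one after the inclusion--exclusion among overlapping simplicial cones, that the index $\abs{Q^\vee/L(\mathbf{V}^\vee)}$ and its coset sum are handled consistently with the definition of $\{\cdot\}_{\mathbf{V},\beta}$, and that the two-case definition of the fractional part --- switched by the sign of $\langle\phi,\mu_\beta^{\mathbf{V}}\rangle$ --- matches, basis by basis, the orientation in which the geometric summations converge toward the correct boundary. Getting all these signs, fractional parts, and coset weights to assemble into literally \eqref{eq:exp_F} is where the real effort lies.
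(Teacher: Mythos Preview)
The paper does not give a self-contained proof of Theorem~\ref{thm:main1}; it is quoted from \cite{KMT-Poincare}, with the remark that the $I=\emptyset$ case is \cite[(3.10)]{KMTLondon} and that the general-$I$ case required developing the theory of lattice sums of hyperplane arrangements in \cite{KMT2014}. Your sketch follows exactly this route---fiber decomposition over $P_{I++}$, reduction to the generating-function identity \eqref{eq:exp_F} via the extension of \cite[Theorem~4.1]{KMTLondon}, and the partial-fraction/triangulation machinery you correctly identify as the ``lattice sum of a hyperplane arrangement'' decomposition of \cite{KMT2014}---so your approach coincides with the one the paper attributes to \cite{KMT-Poincare}.
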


When $I=\emptyset$, this result was obtained in \cite[(3.10)]{KMTLondon}.   In this case
$P_I=P_{\emptyset}=\{\mathbf{0}\}$, and so there is only one $\lambda$, that is
$\lambda=\mathbf{0}$.
For general $I$, however, we have to consider the above sum with respect to
$\lambda\in P_{++}$, so the situation becomes much more complicated.    In fact, 
to prove Theorem \ref{thm:main1} for general $I$, we had to develop the theory of
certain lattice sums of hyperplane arrangements \cite{KMT2014}, and using the result
in \cite{KMT2014} we proved the general case of Theorem \ref{thm:main1} in
\cite{KMT-Poincare}.

Combining this theorem with \eqref{sahen}, we obtain another tool of showing functional
relations among zeta-functions of root systems.
For this purpose, it is necessary to know the explicit form of
$P(\mathbf{k},\mathbf{y},\lambda;I;\Delta)$.     In view of \eqref{eq:def_F}, this 
can be done if we know the explicit form of
$F(\mathbf{t}_I,\mathbf{y},\lambda;I;\Delta)$.
This point will be discussed in the next two sections.

\if0
\begin{example}\label{A3-zeta-function}
We have the zeta function of type $A_3$ 
\begin{equation}
\zeta_2(\mathbf{s};A_3) =\sum_{l,m,n=1}^\infty \frac{1}{l^{s_1}m^{s_2} n^{s_3} (l+m)^{s_4}(m+n)^{s_5}(l+m+n)^{s_6}}. \label{A3-zeta}
\end{equation}
\end{example}
\fi

\ 

\section{Generating Functions ($B_{r-1}\subset B_r$ and $D_{r-1}\subset D_r$ Cases)}\label{sec-3}

In \cite[Sections 3 and 4]{KMT-Poincare},
we gave the explicit forms of the generating function 
$F(\mathbf{t}_I,\mathbf{y},\lambda;I;\Delta)$
in the cases of the root systems of $A_r$ and $C_r$ with $I=\{2,\ldots,r\}$,
and in the cases of arbitrary root systems with $|I|=1$.

In this section we give
the generating functions in the rest cases of classical root systems, that is, $B_r$ and $D_r$ cases with $I=\{2,\ldots,r\}$, following the method in
\cite[Sections 3 and 4]{KMT-Poincare}.

The other cases, in general, seem much more complicated.
As a first step to consider general cases,
in the next section, we present the generating function of type $A_3$ with $I=\{1,3\}$. 


Let
$I\subset \{1,\ldots,r\}$ with $|I^c|=1$ and put $I^c=\{k\}$.
Then $\Psi_I=\{\alpha_i\}_{i\in I}$,
and we see that
$$
\Delta^{*\vee}=\{\alpha^\vee=\sum_{i=1}^r a_i\alpha_i^\vee\in\Delta_+^\vee~|~a_k=\langle\alpha^\vee,\lambda_k\rangle\neq 0\}.
$$ 
Since $|\mathbf{V}_I|=1$ in the present case, we have
\begin{equation}
  \mathscr{V}_I=\{\mathbf{V}=\{\beta\}\cup\Psi_I\}_{\beta\in\Delta^*}.
\end{equation}
For $\mathbf{V}=\{\beta\}\cup\Psi_I\in\mathscr{V}_I$ and
$\gamma\in\Delta^*\setminus\{\beta\}$, 
we have
the transposes $p^*_{\mathbf{V}_I^\perp}$ of the projections 
$p_{\mathbf{V}_I^\perp}$ (defined by \eqref{eq:proj}) as 
\begin{equation}\label{def-p-*}
 p^*_{\mathbf{V}_I^\perp}(\gamma^\vee)=\gamma^\vee-\langle\gamma^\vee,\mu^{\mathbf{V}}_\beta\rangle\beta^\vee.
\end{equation}
We put $b_i=b_i(\beta)=\langle\beta^\vee,\lambda_i\rangle$ ($1\leq i\leq r$) so that
\begin{equation}
  \beta^\vee=\sum_{i=1}^r b_i\alpha_i^\vee,\qquad
  \mu_\beta^{\mathbf{V}}=\frac{\lambda_k}{b_k}.  
\end{equation}
Write $\mathbf{y}=y_1\alpha_1^\vee+\cdots+y_r\alpha_r^\vee$
and $\lambda=\sum_{\substack{i=1\\i\neq k}}^r m_i\lambda_i\in P_I$. 
Then we obtain the following form of the generating function
(under the identification $\mathbf{y}=(y_i)_{1\leq i\leq r}$ and
$\lambda=(m_i)_{1\leq i(\neq k)\leq r}$):
\begin{align}\label{Fq-general}
  &  F((t_\beta)_{\beta\in\Delta^*},(y_i)_{1\leq i\leq r},(m_i)_{1\leq i(\neq k)\leq r};I;\Delta)  
\\
  &=
  \sum_{\beta\in\Delta^*}
  \prod_{\gamma\in\Delta^*\setminus\{\beta\}}\frac{t_\gamma}{t_\gamma-\frac{\langle\gamma^\vee,\lambda_k\rangle}{b_k}t_\beta-2\pi\sqrt{-1}\langle p^*_{\mathbf{V}_I^\perp}(\gamma^\vee),\lambda\rangle}\notag
  \\
  &\qquad\times
\frac{1}{b_k}\sum_{0\leq a_k<b_k}
  \exp\Bigl(2\pi\sqrt{-1}
  \sum_{\substack{i=1\\i\neq k}}^r m_i \Bigl(y_i-\frac{b_i}{b_k}(y_k+a_k)
  \Bigr)
  \Bigr)\notag\\
& \qquad \times 
  \frac{t_\beta\exp\Bigl(t_\beta\Bigl\{\dfrac{y_k+a_k}{b_k}\Bigr\}\Bigr)}{e^{t_\beta}-1}
  \notag
\end{align}
(see \cite[(25)]{KMT-Poincare}).
Using the above expressions, we will give
explicit forms of generating functions of types $B_r$ and $D_r$.

\subsection{$B_r$ Case}
We realize $\Delta_+^\vee=\{e_i\pm e_j~|~1\leq i<j\leq r\}\cup\{2e_j~|~1\leq j\leq r\}$
and $\Psi^{\vee}=\{e_1-e_2,\ldots,e_{r-1}-e_r, 2e_r\}$. 
Then 
\begin{align}\label{lambda_B}
\lambda_i=\left\{
  \begin{array}{ll}
   e_1+\cdots+e_i  &  (1\leq i\leq r-1), \\
   (e_1+\cdots+e_r)/2  &  (i=r).
  \end{array}
  \right.
  \end{align}
 
Choose $I=\{2,\ldots,r\}$ and $I^c=\{1\}$ as in the following diagram.
\begin{equation}
\setlength{\unitlength}{1pt}\begin{picture}(0,0)
\put(149,2){\oval(238,28)}
\end{picture}
\VN{\alpha_1}\E\VN{\alpha_2}\E\V\EO\V\E\V\E\V\EB{\E}{\E}\VN{\!\!\!\!\!\!\alpha_{r}}
\end{equation}
Then 
\begin{equation}
  \Psi_I^\vee=\{\alpha_2^\vee=e_2-e_3,\ldots,\alpha_{r-1}^\vee=e_{r-1}-e_r,\alpha_r^\vee=2e_r\}
\end{equation}
and $\Delta^{*\vee}=\{e_1\pm e_j~|~2\leq j\leq r\}\cup\{2e_1\}$. 
Hence 
\begin{equation}
  \mathscr{V}_I^\vee=\{\{e_1-e_j\}\cup\Psi_I^\vee\}_{2\leq j\leq r}
\cup\{\{e_1+e_j\}\cup\Psi_I^\vee\}_{2\leq j\leq r}
\cup\{\{2e_1\}\cup\Psi_I^\vee\}.
\end{equation}
For $\beta^\vee=e_1-e_j$ ($2\leq j\leq r$), we see that
\begin{equation}
  b_l=\langle\beta^\vee,\lambda_l\rangle=
  \begin{cases}
    1\qquad&(1\leq l<j)\\
    0\qquad&(j\leq l\leq r),
  \end{cases}
\end{equation}
and in particular $\mu_\beta^{\mathbf{V}}=\lambda_1=e_1$.
Therefore for $\gamma^\vee=e_1-e_i\in\Delta^{*\vee}$ ($i\neq j$)
and $\gamma^\vee=e_1+e_i\in\Delta^{*\vee}$,
\begin{equation}
  \begin{split}
    p^*_{\mathbf{V}_I^\perp}(\gamma^\vee)
    &=e_1\pm e_i-\langle e_1\pm e_i,\lambda_1\rangle(e_1-e_j)
    \\
    &=e_1\pm e_i-(e_1-e_j)
    \\
    &=e_j\pm e_i\in\Delta_I^\vee,
  \end{split}
\end{equation}
and 
\begin{equation}
  \begin{split}
    p^*_{\mathbf{V}_I^\perp}(2e_1)
    &=2e_1-\langle 2e_1,\lambda_1\rangle(e_1-e_j)
    \\
    &=2e_1-2(e_1-e_j)
    \\
    &=2e_j\in\Delta_I^\vee.
  \end{split}
\end{equation}
Next for $\beta^\vee=e_1+e_j$ ($2\leq j\leq r$), we see that
\begin{equation}
  b_l=\langle\beta^\vee,\lambda_l\rangle=
  \begin{cases}
    1\qquad&(1\leq l<j)\\
    2\qquad&(j\leq l<r)\\
    1\qquad&(l=r),
  \end{cases}
\end{equation}
and in particular $\mu_\beta^{\mathbf{V}}=\lambda_1=e_1$.
Therefore for $\gamma^\vee=e_1+e_i\in\Delta^{*\vee}$ ($i\neq j$)
and $\gamma^\vee=e_1-e_i\in\Delta^{*\vee}$,
\begin{equation}
  \begin{split}
    p^*_{\mathbf{V}_I^\perp}(\gamma^\vee)
    &=e_1\pm e_i-\langle e_1\pm e_i,\lambda_1\rangle(e_1+e_j)
    \\
    &=e_1\pm e_i-(e_1+e_j)
    \\
    &=-e_j\pm e_i\in\Delta_I^\vee,
  \end{split}
\end{equation}
and 
\begin{equation}
  \begin{split}
    p^*_{\mathbf{V}_I^\perp}(2e_1)
    &=2e_1-\langle 2e_1,\lambda_1\rangle(e_1+e_j)
    \\
    &=2e_1-2(e_1+e_j)
    \\
    &=-2e_j\in\Delta_I^\vee.
  \end{split}
\end{equation}
Finally for $\beta^\vee=2e_1$, we see that
\begin{equation}
  b_l=\langle\beta^\vee,\lambda_l\rangle=
  \begin{cases}
    2\qquad&(1\leq l<r)\\
    1\qquad&(l=r),
  \end{cases}
\end{equation}
and in particular $\mu_\beta^{\mathbf{V}}=\lambda_1/2=e_1/2$.
Therefore for $\gamma^\vee=e_1\pm e_i\in\Delta^{*\vee}$,
\begin{equation}
  \begin{split}
    p^*_{\mathbf{V}_I^\perp}(\gamma^\vee)
    &=e_1\pm e_i-\langle e_1\pm e_i,\lambda_1/2\rangle 2e_1
    \\
    &=e_1\pm e_i-e_1
    \\
    &=\pm e_i\in\Delta_I^\vee/2.
  \end{split}
\end{equation}

Using the above data of $p^*_{\mathbf{V}_I^\perp}(\gamma^\vee)$ and 
\eqref{lambda_B}, we can calculate
$$
\langle p^*_{\mathbf{V}_I^\perp}(\gamma^\vee), \lambda\rangle
=\langle p^*_{\mathbf{V}_I^\perp}(\gamma^\vee), m_2\lambda_2+\cdots+
m_r\lambda_r\rangle.
$$
Let
\begin{align*}
m_{j,k}=\left\{
  \begin{array}{ll}
  m_j+\cdots+m_k  &  (j\leq k), \\
  0  &  ({\rm otherwise}).
  \end{array}\right.
\end{align*}
(Notice that this notation is different from $m_{j,k}$ used in \cite[(32)]{KMT-Poincare}.)
Then we obtain that, when $\beta^{\vee}=e_1-e_j$ ($2\leq j\leq r$), then
\begin{align*}
\langle p^*_{\mathbf{V}_I^\perp}(\gamma^\vee), \lambda\rangle=\left\{
  \begin{array}{ll}
    -m_{i,j-1} & (\gamma^{\vee}=e_1-e_i, i<j)\\
    m_{j,i-1} & (\gamma^{\vee}=e_1-e_i, j<i)\\
    m_{i,j-1}+2m_{j,r-1}+m_r & (\gamma^{\vee}=e_1+e_i,i\leq j)\\
    m_{j,i-1}+2m_{i,r-1}+m_r & (\gamma^{\vee}=e_1+e_i,j<i)\\
    2m_{j,r-1}+m_r & (\gamma^{\vee}=2e_1),
  \end{array}\right.
  \end{align*} 
when $\beta^{\vee}=e_1+e_j$ ($2\leq j\leq r$), then
\begin{align*}
\langle p^*_{\mathbf{V}_I^\perp}(\gamma^\vee), \lambda\rangle=\left\{
  \begin{array}{ll}
    -(m_{i,j-1}+2m_{j,r-1}+m_r) & (\gamma^{\vee}=e_1-e_i,i\leq j)\\
    -(m_{j,i-1}+2m_{i,r-1}+m_r) & (\gamma^{\vee}=e_1-e_i,j<i)\\
    m_{i,j-1} & (\gamma^{\vee}=e_1+e_i, i<j)\\
    -m_{j,i-1} & (\gamma^{\vee}=e_1+e_i, j<i)\\
    -(2m_{j,r-1}+m_r) & (\gamma^{\vee}=2e_1),
  \end{array}\right.
  \end{align*} 
and when $\beta^{\vee}=2e_1$, then
$$
\langle p^*_{\mathbf{V}_I^\perp}(\gamma^\vee), \lambda\rangle=
\pm\left(m_{i,r-1}+\frac{1}{2}m_r\right)
$$
for $\gamma^{\vee}=e_1\pm e_i$.

Therefore by \eqref{Fq-general} we now obtain the explicit form of the generating
function in the $B_r$ case.
We write $t_{e_1\pm e_i}=t_{\pm i}$ for $2\leq i\leq r$ and $t_{e_1}=t_1$.
Then the explicit form is as follows.
\begin{align}
  \label{Br-gene-F}
   & F(t_1,(t_{\pm i})_{2\leq i\leq r},(y_j)_{1\leq j\leq r},(m_i)_{2\leq i\leq r};\{2,\ldots,r\};B_r)  
\\
  &=
\sum_{j=2}^{r}
\prod_{2\leq i<j}\frac{t_{-i}}{t_{-i}-t_{-j}+2\pi\sqrt{-1}m_{i,j-1}}
\prod_{j<i\leq r}\frac{t_{-i}}{t_{-i}-t_{-j}-2\pi\sqrt{-1}m_{j,i-1}}
\notag\\
&\qquad\times
\prod_{2\leq i\leq j}\frac{t_{+i}}{t_{+i}-t_{-j}-2\pi\sqrt{-1}(m_{i,j-1}+2m_{j,r-1}+m_r)}
\notag\\
&\qquad\times
\prod_{j<i\leq r}\frac{t_{+i}}{t_{+i}-t_{-j}-2\pi\sqrt{-1}(m_{j,i-1}+2m_{i,r-1}+m_r)}
\notag\\
&\qquad\times\frac{t_1}{t_1-2t_{-j}-2\pi\sqrt{-1}(2m_{j,r-1}+m_{r})}
\notag\\
&\qquad\times
\exp\Bigl(2\pi\sqrt{-1}
\Bigl(
    \sum_{i=2}^{j-1}m_i(y_i-y_1)+\sum_{i=j}^r m_iy_i
\Bigr)
\Bigr)
\frac{t_{-j}\exp(t_{-j}\{y_1\})}{e^{t_{-j}}-1}
\notag\\
&+
\sum_{j=2}^{r}
\prod_{2\leq i\leq j}\frac{t_{-i}}{t_{-i}-t_{+j}+2\pi\sqrt{-1}(m_{i,j-1}+2m_{j,r-1}+m_r)}
\notag\\
&\qquad\times
\prod_{j<i\leq r}\frac{t_{-i}}{t_{-i}-t_{+j}+2\pi\sqrt{-1}(m_{j,i-1}+2m_{i,r-1}+m_r)}
\notag\\
&\qquad\times
\prod_{2\leq i<j}\frac{t_{+i}}{t_{+i}-t_{+j}-2\pi\sqrt{-1}m_{i,j-1}}
\prod_{j<i\leq r}\frac{t_{+i}}{t_{+i}-t_{+j}+2\pi\sqrt{-1}m_{j,i-1}}
\notag\\
&\qquad\times\frac{t_1}{t_1-2t_{+j}+2\pi\sqrt{-1}(2m_{j,r-1}+m_{r})}
\notag\\
&\qquad\times
\exp\Bigl(2\pi\sqrt{-1}
\Bigl(
    \sum_{i=2}^{j-1}m_i(y_i-y_1)+\sum_{i=j}^{r-1} m_i(y_i-2y_1) +m_r(y_r-y_1)
\Bigr)
     \Bigr)
\notag\\
&\qquad\times     
\frac{t_{+j}\exp(t_{+j}\{y_1\})}{e^{t_{+j}}-1}
\notag\\
&+
\prod_{2\leq i\leq r}\frac{t_{-i}}{t_{-i}-\frac{1}{2}t_{1}+\pi\sqrt{-1}(2m_{i,r-1}+m_r)}
\notag\\
&\qquad\times
\prod_{2\leq i\leq r}\frac{t_{+i}}{t_{+i}-\frac{1}{2}t_{1}-\pi\sqrt{-1}(2m_{i,r-1}+m_r)}
\notag\\
&\qquad\times
\frac{1}{2}\Biggl(\exp\Bigl(2\pi\sqrt{-1}
\Bigl(
    \sum_{i=2}^{r-1}m_i(y_i-y_1)+m_r(y_r-\frac{1}{2}y_1)
\Bigr)
\Bigr)
\notag\\
&\qquad\qquad\qquad\times
\frac{t_{1}\exp(t_{1}\{\frac{1}{2}y_1\})}{e^{t_{1}}-1}
\notag\\
&\qquad\qquad+
\exp\Bigl(2\pi\sqrt{-1}
\Bigl(
     \sum_{i=2}^{r-1}m_i(y_i-y_1)+m_r(y_r-\frac{1}{2}(y_1+1))
\Bigr)
     \Bigr)
\notag\\
&\qquad\qquad\qquad\times
\frac{t_{1}\exp(t_{1}\{\frac{1}{2}(y_1+1)\})}{e^{t_{1}}-1}
           \Biggr).
           \notag
\end{align}

\subsection{$D_r$ Case}
We realize $\Delta_+^\vee=\{e_i\pm e_j~|~1\leq i<j\leq r\}$
and $\Psi^{\vee}=\{e_1-e_2,\ldots,e_{r-1}-e_r, e_{r-1}+e_r\}$. 
Then 
\begin{align}\label{lambda-D}
\lambda_i=\left\{
  \begin{array}{ll}
  e_1+\cdots+e_i  &  (1\leq i\leq r-2),\\
  \frac{1}{2}(e_1+\cdots+e_{r-1}-e_r)  &  (i=r-1),\\
  \frac{1}{2}(e_1+\cdots+e_{r-1}+e_r)  &  (i=r).
  \end{array}\right.
  \end{align}
Choose $I=\{2,\ldots,r\}$ and $I^c=\{1\}$ as in the following diagram.
\begin{equation}
  \label{diagram_D3}
\setlength{\unitlength}{1pt}\begin{picture}(0,0)
\put(149,2){\oval(238,28)}
\end{picture}
\VN{\alpha_1}\E\VN{\alpha_2}\E\V\EO\V\E\V\E\V
\EDV{\E}{\E}
\end{equation}
Then 
\begin{equation}
  \Psi_I^\vee=\{\alpha_2^\vee=e_2-e_3,\ldots,\alpha_{r-1}^\vee=e_{r-1}-e_r,\alpha_r^\vee=e_{r-1}+e_r\}
\end{equation}
and $\Delta^{*\vee}=\{e_1\pm e_j~|~2\leq j\leq r\}$. 
Hence 
\begin{equation}
  \mathscr{V}_I^\vee=\{\{e_1-e_j\}\cup\Psi_I^\vee\}_{2\leq j\leq r}
\cup\{\{e_1+e_j\}\cup\Psi_I^\vee\}_{2\leq j\leq r}.
\end{equation}
For $\beta^\vee=e_1-e_j$ ($2\leq j\leq r$), we see that
\begin{equation}
  b_l=\langle\beta^\vee,\lambda_l\rangle=
  \begin{cases}
    1\qquad&(l<j)\\
    0\qquad&(\text{otherwise}),
  \end{cases}
\end{equation}
and in particular $\mu_\beta^{\mathbf{V}}=\lambda_1=e_1$.
Therefore for $\gamma^\vee=e_1-e_i\in\Delta^{*\vee}$ ($i\neq j$)
and $\gamma^\vee=e_1+e_i\in\Delta^{*\vee}$,
\begin{equation}\label{atodechuui1}
  \begin{split}
    p^*_{\mathbf{V}_I^\perp}(\gamma^\vee)
    &=e_1\pm e_i-\langle e_1\pm e_i,\lambda_1\rangle(e_1-e_j)
    \\
    &=e_1\pm e_i-(e_1-e_j)
    \\
    &=
    \begin{cases}
      e_j\pm e_i\in\Delta_I^\vee\qquad&(i\neq j)\\
      2e_i\qquad&(i=j).
    \end{cases}
  \end{split}
\end{equation}
Next for $\beta^\vee=e_1+e_j$ ($2\leq j\leq r$), we see that
\begin{equation}
  b_l=\langle\beta^\vee,\lambda_l\rangle=
  \begin{cases}
    0\qquad&(l=r-1,j=r)\\
    2\qquad&(j\leq l\leq r-2)\\
    1\qquad&(\text{otherwise}),
  \end{cases}
\end{equation}
and in particular $\mu_\beta^{\mathbf{V}}=\lambda_1=e_1$.
Therefore for $\gamma^\vee=e_1+e_i\in\Delta^{*\vee}$ ($i\neq j$)
and $\gamma^\vee=e_1-e_i\in\Delta^{*\vee}$,
\begin{equation}\label{atodechuui2}
  \begin{split}
    p^*_{\mathbf{V}_I^\perp}(\gamma^\vee)
    &=e_1\pm e_i-\langle e_1\pm e_i,\lambda_1\rangle(e_1+e_j)
    \\
    &=e_1\pm e_i-(e_1+e_j)
    \\
    &=
    \begin{cases}
      -e_j\pm e_i\in\Delta_I^\vee\qquad&(i\neq j) \\
      -2e_j\qquad&(i=j).
    \end{cases}
  \end{split}
\end{equation}

Using the same notation as in the $B_r$ case,
we can now calculate
$\langle p^*_{\mathbf{V}_I^\perp}(\gamma^\vee), \lambda\rangle$
and
write down the explicit form of the generating function as follows.
\begin{align}
  \label{Dr-gene-F}
  &  F((t_{\pm i})_{2\leq i\leq r},(y_j)_{1\leq j\leq r},(m_i)_{2\leq i\leq r};\{2,\ldots,r\};D_r)  
\\
  &=
\sum_{j=2}^{r-1}
\prod_{2\leq i<j}\frac{t_{-i}}{t_{-i}-t_{-j}+2\pi\sqrt{-1}m_{i,j-1}}
    \prod_{j<i\leq r}\frac{t_{-i}}{t_{-i}-t_{-j}-2\pi\sqrt{-1}m_{j,i-1}}
    \notag\\
&\qquad\times
\prod_{2\leq i\leq j}\frac{t_{+i}}{t_{+i}-t_{-j}-2\pi\sqrt{-1}(m_{i,j-1}+2m_{j,r-2}+m_{r-1,r})}
\notag\\
&\qquad\times
\prod_{j<i<r}\frac{t_{+i}}{t_{+i}-t_{-j}-2\pi\sqrt{-1}(m_{j,i-1}+2m_{i,r-2}+m_{r-1,r})}
\notag\\
&\qquad\times
     \frac{t_{+r}}{t_{+r}-t_{-j}-2\pi\sqrt{-1}(m_{j,r-2}+m_{r})}
\notag\\
&\qquad\times
\exp\Bigl(2\pi\sqrt{-1}
\Bigl(
    \sum_{i=2}^{j-1}m_i(y_i-y_1)+\sum_{i=j}^r m_iy_i
\Bigr)
\Bigr)
\frac{t_{-j}\exp(t_{-j}\{y_1\})}{e^{t_{-j}}-1}
\notag  \\
  &+ 
\prod_{2\leq i<r}\frac{t_{-i}}{t_{-i}-t_{-r}+2\pi\sqrt{-1}m_{i,r-1}}
\notag\\
&\qquad\times
     \prod_{2\leq i<r}\frac{t_{+i}}{t_{+i}-t_{-r}-2\pi\sqrt{-1}(m_{i,r-2}+m_{r})}
\notag\\
&\qquad\times
     \frac{t_{+r}}{t_{+r}-t_{-r}-2\pi\sqrt{-1}(m_{r}-m_{r-1})}
\notag\\
&\qquad\times
\exp\Bigl(2\pi\sqrt{-1}
\Bigl(
    \sum_{i=2}^{r-1}m_i(y_i-y_1)+m_r y_r
\Bigr)
\Bigr)
\frac{t_{-r}\exp(t_{-r}\{y_1\})}{e^{t_{-r}}-1}
\notag\\
&+
\sum_{j=2}^{r-1}
\prod_{2\leq i\leq j}\frac{t_{-i}}{t_{-i}-t_{+j}+2\pi\sqrt{-1}(m_{i,j-1}+2m_{j,r-2}+m_{r-1,r})}
\notag\\
&\qquad\times
\prod_{j<i<r}\frac{t_{-i}}{t_{-i}-t_{+j}+2\pi\sqrt{-1}(m_{j,i-1}+2m_{i,r-2}+m_{r-1,r})}
\notag\\
&\qquad\times
\frac{t_{-r}}{t_{-r}-t_{+j}+2\pi\sqrt{-1}(m_{j,r-2}+m_{r})}
\notag\\
&\qquad\times
\prod_{2\leq i<j}\frac{t_{+i}}{t_{+i}-t_{+j}-2\pi\sqrt{-1}m_{i,j-1}}
\prod_{j<i\leq r}\frac{t_{+i}}{t_{+i}-t_{+j}+2\pi\sqrt{-1}m_{j,i-1}}
\notag\\
&\qquad\times
\exp\Bigl(2\pi\sqrt{-1}
\Bigl(
     \sum_{i=2}^{j-1} m_i(y_i-y_1) +
     \sum_{i=j}^{r-2}m_i(y_i-2y_1)
\notag  \\
  &\qquad\qquad+
     m_{r-1}(y_{r-1}-y_1)+
     m_{r}(y_{r}-y_1)
     \Bigr)
     \Bigr)
\notag  \\
& \qquad\times 
       \frac{t_{+j}\exp(t_{+j}\{y_1\})}{e^{t_{+j}}-1}
\notag  \\
  &+
    \prod_{2\leq i<r}\frac{t_{-i}}{t_{-i}-t_{+r}+2\pi\sqrt{-1}(m_{i,r-2}+m_{r})}
\notag  \\
  & \qquad\times 
    \frac{t_{-r}}{t_{-r}-t_{+r}-2\pi\sqrt{-1}(m_{r-1}-m_{r})}
\notag\\
&\qquad\times
\prod_{2\leq i<r}\frac{t_{+i}}{t_{+i}-t_{+r}-2\pi\sqrt{-1}m_{i,r-1}}
\notag\\
&\qquad\times
\exp\Bigl(2\pi\sqrt{-1}
\Bigl(
     \sum_{i=2}^{r-2} m_i(y_i-y_1) +m_{r-1} y_{r-1} +m_r(y_r-y_1) 
     \Bigr)
     \Bigr)
\notag  \\
& \qquad\times 
\frac{t_{+r}\exp(t_{+r}\{y_1\})}{e^{t_{+r}}-1}.
\end{align}

\begin{remark}

It should be noted that $\pm 2e_i$ 
appearing on \eqref{atodechuui1} and \eqref{atodechuui2} are not proportional to any coroots.
However they can be regarded as coroots in the root system of type $B_r$.
\end{remark}

\ 

\section{Generating Functions ($A_1^2\subset A_3$ Case)}\label{sec-4}

In this section, as another example, we consider the case of the root system of type
$A_3$ with 
$I=\{1,3\}$ and $I^c=\{2\}$, as in the following diagram.
\begin{equation}
  \label{diagram_A3}
\setlength{\unitlength}{1pt}\begin{picture}(0,0)\put(2,2){\circle{26}}%
\put(86,2){\circle{26}}%
\end{picture}
  \VN{\alpha_1}\E\VN{\alpha_2}\E\VN{\alpha_3}
\end{equation}
Then 
\begin{equation}
    \Psi_I^\vee=\{\alpha_1^\vee,\alpha_3^\vee\}
\end{equation}
and 
$$\Delta^{*\vee}=\{\alpha_2^\vee,\alpha_1^\vee+\alpha_2^\vee,\alpha_2^\vee+\alpha_3^\vee,\alpha_1^\vee+\alpha_2^\vee+\alpha_3^\vee\}=\{e_i-e_j\}_{1\leq i\leq 2<j\leq4}.$$
Hence 
\begin{equation}
  \mathscr{V}_I^\vee=\{\{\beta^\vee\}\cup\Psi_I^\vee\}_{\beta^\vee\in\Delta^{*\vee}}
\end{equation}
We see that
\begin{gather}
  b_l(\alpha_1+\alpha_2=e_1-e_3)=\langle\alpha_1^\vee+\alpha_2^\vee,\lambda_l\rangle=
  \begin{cases}
    1\qquad&(l=1,2)\\
    0\qquad&(l=3),
  \end{cases}
  \\
  b_l(\alpha_2=e_2-e_3)=\langle\alpha_2^\vee,\lambda_l\rangle=
  \begin{cases}
    1\qquad&(l=2)\\
    0\qquad&(l=1,3),
  \end{cases}
  \\
  b_l(\alpha_2+\alpha_3=e_2-e_4)=\langle\alpha_2^\vee+\alpha_3^\vee,\lambda_l\rangle=
  \begin{cases}
    1\qquad&(l=2,3)\\
    0\qquad&(l=1),
  \end{cases}
  \\
  b_l(\alpha_1+\alpha_2+\alpha_3=e_1-e_4)=\langle\alpha_1^\vee+\alpha_2^\vee+\alpha_3^\vee,\lambda_l\rangle=1,
\end{gather}
and in particular $\mu_\beta^{\mathbf{V}}=\lambda_2$.
Therefore for $\gamma^\vee\in\Delta^{*\vee}\setminus\{\beta^\vee\}$,
\begin{equation}
  \begin{split}
    p^*_{\mathbf{V}_I^\perp}(\gamma^\vee)
    &=\gamma^\vee-\langle \gamma^\vee,\lambda_2\rangle\beta^\vee
    \\
    &=\gamma^\vee-\beta^\vee.
  \end{split}
\end{equation}
By putting $t_{e_i-e_j}=t_{ij}$ for $1\leq i\leq 2<j\leq 4$,
we obtain the generating function as follows.
\begin{align}\label{GF:A3}
  &  F((t_{13},t_{23},t_{23},t_{14}),(y_1,y_2,y_3),(m_1,m_3);\{1,3\};A_3)  
\\
  &=
  \frac{t_{23}}{t_{23}-t_{13}+2\pi\sqrt{-1}m_1}
  \frac{t_{14}}{t_{14}-t_{13}-2\pi\sqrt{-1}m_3}\notag\\
&\qquad\times
  \frac{t_{24}}{t_{24}-t_{13}-2\pi\sqrt{-1}(m_3-m_1)}
  \notag\\
&\qquad\times
\exp\Bigl(2\pi\sqrt{-1}
\Bigl(
    m_1(y_1-y_2)+m_3y_3
\Bigr)
\Bigr)
\frac{t_{13}\exp(t_{13}\{y_2\})}{e^{t_{13}}-1}
\notag\\
  &+
  \frac{t_{13}}{t_{13}-t_{23}-2\pi\sqrt{-1}m_1}
  \frac{t_{14}}{t_{14}-t_{23}-2\pi\sqrt{-1}(m_1+m_3)}\notag\\
&\qquad\times
  \frac{t_{24}}{t_{24}-t_{23}-2\pi\sqrt{-1}m_3}
  \notag\\
&\qquad\times
\exp\Bigl(2\pi\sqrt{-1}
\Bigl(
    m_1y_1+m_3y_3
\Bigr)
\Bigr)
\frac{t_{23}\exp(t_{23}\{y_2\})}{e^{t_{23}}-1}
\notag\\
  &+
  \frac{t_{13}}{t_{13}-t_{24}-2\pi\sqrt{-1}(m_1-m_3)}
  \frac{t_{14}}{t_{14}-t_{24}-2\pi\sqrt{-1}m_1}\notag\\
&\qquad\times
  \frac{t_{23}}{t_{23}-t_{24}+2\pi\sqrt{-1}m_3}
  \notag\\
&\qquad\times
\exp\Bigl(2\pi\sqrt{-1}
\Bigl(
    m_1y_1+m_3(y_3-y_2)
\Bigr)
\Bigr)
\frac{t_{24}\exp(t_{24}\{y_2\})}{e^{t_{24}}-1}
\notag\\
  &+
  \frac{t_{13}}{t_{13}-t_{14}+2\pi\sqrt{-1}m_3}
  \frac{t_{23}}{t_{23}-t_{14}+2\pi\sqrt{-1}(m_1+m_3)}\notag\\
&\qquad\times
  \frac{t_{24}}{t_{24}-t_{14}+2\pi\sqrt{-1}m_1}
  \notag\\
&\qquad\times
\exp\Bigl(2\pi\sqrt{-1}
\Bigl(
    m_1(y_1-y_2)+m_3(y_3-y_2)
\Bigr)
\Bigr)
\frac{t_{14}\exp(t_{14}\{y_2\})}{e^{t_{14}}-1}.\notag
\end{align}

\ 

\section{Several examples of functional relations}\label{sec-5}

From the explicit forms of generating functions given in the preceding two sections,
we can deduce various functional relations.    From the results proved in Section
\ref{sec-3}, we can show the general forms of functional relations in the cases of
types $B_r$ and $D_r$, similar to \cite[Theorem 3.2]{KMT-Poincare}, though we do not give the statement in the present paper. 
We here give explicit examples of generating functions of type $B_3$ and also of type $A_3$ $(\simeq D_3)$.

\subsection{$B_3$ Case}

We write zeta-functions of root systems of type $B_2$ and type $B_3$ as
\begin{align*}
& \zeta_2(s_1,s_2,s_3,s_4;B_2) =\sum_{m_1,m_2\geq 1}\frac{1}{m_1^{s_1}m_2^{s_2}(m_1+m_2)^{s_3}(2m_1+m_2)^{s_4}},\\
  &\zeta_3(s_1,s_2,s_3,s_4,s_5,s_6,s_7,s_8,s_9;B_3)\\
  &\qquad=\sum_{m_1,m_2,m_3\geq 1}\frac{1}{m_1^{s_1}m_2^{s_2}m_3^{s_3}(m_1+m_2)^{s_4}(m_2+m_3)^{s_5}(2m_2+m_3)^{s_6}}\\
  & \qquad \times \frac{1}{(m_1+m_2+m_3)^{s_7}(m_1+2m_2+m_3)^{s_8}(2m_1+2m_2+m_3)^{s_9}}.
\end{align*}
These series converge absolutely for $\Re s_1,\ldots,\Re s_9\geq 1$ because
$m_1+m_2\geq2\sqrt{m_1m_2}$ and $m_1+m_2+m_3\geq3\sqrt[3]{m_1m_2m_3}$.

Here we give explicit forms of functional relations among them as follows.
Let $r=3$, $\Delta=\Delta(B_3)$, $I=\{ 2,3\}$ and $(y_1,y_2,y_3)=(0,0,0)$ 
in \eqref{Br-gene-F}. Then we have
  \begin{align}\label{B3-gene-F}
  &F((t_1,t_{\pm 2},t_{\pm 3}),\mathbf{0},(m_2,m_3);\{2,3\};B_3)  
  \\
  &= 
\frac{t_{-3}}{t_{-3}-t_{-2}-2\pi\sqrt{-1}m_2}
\frac{t_{+2}}{t_{+2}-t_{-2}-2\pi\sqrt{-1}(2m_2+m_3)}
\notag\\
&\qquad\times
\frac{t_{+3}}{t_{+3}-t_{-2}-2\pi\sqrt{-1}(m_2+m_3)}\notag\\
&\qquad\times \frac{t_1}{t_1-2t_{-2}-2\pi\sqrt{-1}(2m_2+m_{3})}
\frac{t_{-2}}{e^{t_{-2}}-1}
\notag\\
&+ 
\frac{t_{-2}}{t_{-2}-t_{-3}+2\pi\sqrt{-1}m_2}
\frac{t_{+2}}{t_{+2}-t_{-3}-2\pi\sqrt{-1}(m_2+m_3)}
\notag\\
&\qquad\times
\frac{t_{+3}}{t_{+3}-t_{-3}-2\pi\sqrt{-1}m_3}
\frac{t_1}{t_1-2t_{-3}-2\pi\sqrt{-1}m_{3}}
\frac{t_{-3}}{e^{t_{-3}}-1}
\notag\\
&+ 
\frac{t_{-2}}{t_{-2}-t_{+2}+2\pi\sqrt{-1}(2m_2+m_3)}
\frac{t_{-3}}{t_{-3}-t_{+2}+2\pi\sqrt{-1}(m_2+m_3)}
\notag\\
&\qquad\times
\frac{t_{+3}}{t_{+3}-t_{+2}+2\pi\sqrt{-1}m_2}
\frac{t_1}{t_1-2t_{+2}+2\pi\sqrt{-1}(2m_2+m_{3})}
\frac{t_{+2}}{e^{t_{+2}}-1}
\notag\\
&+ 
\frac{t_{-2}}{t_{-2}-t_{+3}+2\pi\sqrt{-1}(m_2+m_3)}
\frac{t_{-3}}{t_{-3}-t_{+3}+2\pi\sqrt{-1}m_3}
\notag\\
&\qquad\times
\frac{t_{+2}}{t_{+2}-t_{+3}-2\pi\sqrt{-1}m_2}
\frac{t_1}{t_1-2t_{+3}+2\pi\sqrt{-1}m_{3}}
\frac{t_{+3}}{e^{t_{+3}}-1}
\notag\\
&+
\frac{t_{-2}}{t_{-2}-\frac{1}{2}t_{1}+\pi\sqrt{-1}(2m_2+m_3)}
\frac{t_{-3}}{t_{-3}-\frac{1}{2}t_{1}+\pi\sqrt{-1}m_3}
\notag\\
&\qquad\times
\frac{t_{+2}}{t_{+2}-\frac{1}{2}t_{1}-\pi\sqrt{-1}(2m_2+m_3)}
\frac{t_{+3}}{t_{+3}-\frac{1}{2}t_{1}-\pi\sqrt{-1}m_3}
\notag\\
&\qquad\times
\frac{1}{2}\Bigl(\frac{t_{1}}{e^{t_{1}}-1}
+
(-1)^{m_3}
\frac{t_{1}\exp(\frac{1}{2}t_{1})}{e^{t_{1}}-1}
\Bigr).\notag
\end{align}
Hence we can compute $P(\mathbf{k},\mathbf{y},\lambda;I;B_3)$.
For example, we obtain
\begin{align}
 & P((2,1,1,1,1),\mathbf{0},(m_2,m_3);\{2,3\};B_3)
  \\
    &=\frac{1}{16 \pi ^6 m_2^2 m_3^3 (m_2+m_3)}
    +\frac{1}{16 \pi ^6 m_2^2 (m_2+m_3) (2 m_2+m_3)^3}\notag\\
    &-\frac{5}{16 \pi ^6 m_2 m_3^4 (m_2+m_3)}
    \notag\\
    &-\frac{(-1)^{m_3}}{4 \pi ^6 m_3^4 (2 m_2+m_3)^2}
    -\frac{1}{4 \pi ^6 m_3^4 (2 m_2+m_3)^2}
    -\frac{1}{16 \pi ^6 m_2 m_3^3 (m_2+m_3)^2}
    \notag\\
    &-\frac{(-1)^{m_3}}{24 \pi ^4 m_3^2 (2 m_2+m_3)^2}
    -\frac{(-1)^{m_3}}{4 \pi ^6 m_3^2 (2 m_2+m_3)^4}
    -\frac{1}{4 \pi ^6 m_3^2 (2 m_2+m_3)^4}
    \notag\\
    &+\frac{1}{12 \pi ^4 m_3^2 (2 m_2+m_3)^2}
    +\frac{1}{16 \pi ^6 m_2 (m_2+m_3)^2 (2 m_2+m_3)^3}
    \notag\\
    &+\frac{5}{16 \pi ^6 m_2 (m_2+m_3) (2 m_2+m_3)^4}.\notag
  \end{align}
Therefore, from Theorem \ref{thm:main1} and by using the relation
\begin{align*}
&\sum_{m,n\geq 1}\frac{(-1)^n}{m^{s_1}n^{s_2}(m+n)^{s_3}(2m+n)^{s_4}}\\
&\ =-\sum_{m,n\geq1}\frac{1}{m^{s_1}n^{s_2}(m+n)^{s_3}(2m+n)^{s_4}}\\
&\quad +2\sum_{m,n\geq 1}\frac{1}{m^{s_1}(2n)^{s_2}(m+2n)^{s_3}(2m+2n)^{s_4}}\\
&\ =-\zeta_2(s_1,s_2,s_3,s_4;B_2)+2^{1-s_2-s_4}\zeta_2(s_2,s_1,s_4,s_3;B_2),
\end{align*}
we obtain the functional relation 
\begin{align}
  \label{B3-fr-1}
  & \ \ \ \zeta_3(1,s_2,s_3,1,s_5,s_6,1,1,2;B_3)-\zeta_3(1,1,s_3,s_2,1,2,s_5,1,s_6;B_3)\\
\notag
  & +\zeta_3(s_2,1,2,1,1,s_3,1,s_5,s_6;B_3)+\zeta_3(s_2,1,2,1,1,s_3,1,s_5,s_6;B_3)\\
  \notag
  & -\zeta_3(1,1,s_3,s_2,1,2,s_5,1,s_6;B_3)+\zeta_3(1,s_2,s_3,1,s_5,s_6,1,1,2;B_3)
    \\
\notag
  & =
    (-1)^5\frac{(2\pi\sqrt{-1})^{6}}{2!1!1!1!1!}
    \sum_{m_2=1}^\infty
    \sum_{m_3=1}^\infty \frac{P((2,1,1,1,1),\mathbf{0},(m_2,m_3);\{2,3\};B_3)}{m_2^{s_2} m_3^{s_3} (m_2+m_3)^{s_5} (2m_2+m_3)^{s_6}}\notag\\
  \notag
  &= 2\zeta_2(s_2+2,s_3+3,s_5+1,s_6;B_2)
+2\zeta_2(s_2+2,s_3,s_5+1,s_6+3;B_2)\notag\\
&-10\zeta_2(s_2+1,s_3+4,s_5+1,s_6;B_2)
-\frac{2^{-s_3-s_6}}{4}\zeta_2(s_3+4,s_2,s_6+2,s_5;B_2)\notag\\
&-2\zeta_2(s_2+1,s_3+3,s_5+2,s_6;B_2)
+4\pi^2\zeta_2(s_2,s_3+2,s_5,s_6+2;B_2)\notag\\
&-\frac{2^{-s_3-s_6}\pi^2}{6}\zeta_2(s_3+2,s_2,s_6+2,s_5;B_2) -\frac{2^{-s_3-s_6}}{4}\zeta_2(s_3+2,s_2,s_6+4,s_5;B_2)\notag\\
&+2\zeta_2(s_2+1,s_3,s_5+2,s_6+3;B_2)
+10\zeta_2(s_2+1,s_3,s_5+1,s_6+4;B_2).\notag
\end{align}
Setting $(s_2,s_5,s_6)=(1,1,2)$, we obtain
\begin{align}
  & 2\zeta_3(1,1,2,1,1,s_3,1,1,2;B_3)\label{B3-fr-2}\\
  \notag
  &= 2\zeta_2(3,s_3+3,2,2;B_2)
+2\zeta_2(3,s_3,2,5;B_2)\notag\\
&-10\zeta_2(2,s_3+4,2,2;B_2)
-\frac{2^{-s_3}}{16}\zeta_2(s_3+4,1,4,1;B_2)\notag\\
&-2\zeta_2(2,s_3+3,3,2;B_2)
+4\pi^2\zeta_2(1,s_3+2,1,4;B_2)\notag\\
&-\frac{2^{-s_3}\pi^2}{24}\zeta_2(s_3+2,1,4,1;B_2) -\frac{2^{-s_3}}{16}\zeta_2(s_3+2,1,6,1;B_2)\notag\\
&+2\zeta_2(2,s_3,3,5;B_2)
+10\zeta_2(2,s_3,2,6;B_2)\notag
\end{align}
which corrensponds to the result for the $C_3$ case 
(see \cite[(97)]{KMT-Poincare}).

Here we recall the known fact that 
\begin{equation}
\zeta_2(a,b,c,d;B_2)\in \mathbb{Q}\left[\pi^2,\{\zeta(2j+1)\}_{j\in \mathbb{N}}\right]\label{B2-Parity}
\end{equation}
for $a,b,c,d\in \mathbb{N}$ with $2 \nmid (a+b+c+d)$, 
which was given by the third-named author (see \cite{Tsu04}). Similar to \cite[(100)]{KMT-Poincare}, setting $s_3=2k-1$ $(k\in \mathbb{N})$ in \eqref{B3-fr-2}, we obtain from \eqref{B2-Parity} that 
\begin{equation}
\begin{split}
&\zeta_3(1,1,2,1,1,2k-1,1,1,2;B_3)\in \mathbb{Q}\left[\pi^2,\{\zeta(2j+1)\}_{j\in \mathbb{N}}\right]
\end{split}
\label{B3-Parity}
\end{equation}
for $k\in \mathbb{N}$. For example, we obtain
\begin{align*}
& \zeta_3(1,1,2,1,1,1,1,1,2;B_3) =\frac{9\pi^4}{320}\zeta(7) - \frac{1429\pi^2}{384}\zeta(9) + \frac{4355}{128}\zeta(11),\\
&\zeta_3(1,1,2,1,1,3,1,1,2;B_3)\\
& \qquad =-\frac{7\pi^4}{320}\zeta(9) + \frac{5143\pi^2}{1536}\zeta(11) - \frac{15833}{512}\zeta(13),\\
& \zeta_3(1,1,2,1,1,5,1,1,2;B_3)\\
&\qquad =\frac{23\pi^8}{2419200}\zeta(7) + \frac{11\pi^6}{20160}\zeta(9) - \frac{941\pi^4}{15360}\zeta(11)\\
& \qquad\quad + \frac{16121\pi^2}{2048}\zeta(13) - \frac{74079}{1024}\zeta(15).
\end{align*}

\subsection{$A_3$ Case and $D_3$ Case}

Here we deduce an explicit functional relation involving the zeta-function of $A_3$, from the
result proved in Section \ref{sec-4}.
From \eqref{eq:def_F} and \eqref{GF:A3}, we can compute $P(\mathbf{k},\mathbf{y},\lambda;{1,3};A_3)$. For example, we obtain $P((1,1,1,1),\mathbf{0},(m_1,m_3);\lambda;{1,3};A_3)$ as follows. When $m_1\neq m_3$, we have
\begin{equation}
  \begin{split}
    &P((1,1,1,1),\mathbf{0},(m_1,m_3);\{1,3\};A_3)\\
    &\quad =\frac{1}{8 \pi ^4 m_1^2 m_3 (m_1+m_3)}
    +\frac{1}{8 \pi ^4 m_1^2 m_3 (m_3-m_1)}
    \\
    &\qquad -\frac{1}{8 \pi ^4 m_1 m_3^2 (m_3-m_1)}
    +\frac{1}{8 \pi ^4 m_1 m_3^2 (m_1+m_3)}
    \\
    &\qquad -\frac{1}{8 \pi ^4 m_1 m_3 (m_3-m_1)^2}
    +\frac{1}{8 \pi ^4 m_1 m_3 (m_1+m_3)^2}.
  \end{split}
\end{equation}
When $m_1=m_3=m$, we have 
\begin{equation}
    P((1,1,1,1),\mathbf{0},(m,m);\{1,3\};A_3)=
    \frac{7}{32 \pi ^4 m^4}-\frac{1}{48 \pi ^2 m^2}.
\end{equation}
Renaming the variables in the case $r=3$ of \eqref{def_A_r}, we write
\begin{align*}
& \zeta_3(s_1,s_2,s_3,s_4,s_5,s_6;A_3)\\
& =\sum_{m_1,m_2,m_3=1}^\infty \frac{1}{m_1^{s_1}m_2^{s_2}m_3^{s_3}(m_1+m_2)^{s_4}(m_2+m_3)^{s_5}(m_1+m_2+m_3)^{s_6}}.
\end{align*}
This series converges absolutely for $\Re s_1,\ldots,\Re s_6\geq 1$ because
 $m_1+m_2+m_3\geq3\sqrt[3]{m_1m_2m_3}$.

Then, from Theorem \ref{thm:main1}, we obtain the functional relation
\begin{align*}
   &\ \zeta_3(s_1,1,s_3,1,1,1;A_3) - \zeta_3(1,1,1,s_1,s_3,1;A_3)\\
   &\ \ + \zeta_3(1,s_3,1,1,1,s_1;A_3) +\zeta_3(s_3,1,s_1,1,1,1;A_3)  \\
   &\ \  - \zeta_3(1,1,1,s_3,s_1,1;A_3) + \zeta_3(1,s_1,1,1,1,s_3;A_3)\\
   & =(2\pi \sqrt{-1})^4\\
   & \qquad \times \bigg(\sum_{m_1,m_3=1 \atop m_1\neq m_3}^\infty \frac{1}{m_1^{s_1}m_3^{s_3}}P((1,1,1,1),\mathbf{0},(m_1,m_3);\{1,3\};A_3)\\
& \qquad \qquad +\sum_{m=1}^\infty \frac{1}{m^{s_1+s_3}}P((1,1,1,1),\mathbf{0},(m,m);\{1,3\};A_3)\bigg)\\
& =2\bigg\{\zeta_2(s_1+2,s_3+1,1;A_2) + \zeta_2(s_1+2,1,s_3+1;A_2) \\
& \quad -\zeta_2(s_3+1,1,s_1+2;A_2) - \zeta_2(s_1+1,1,s_3+2;A_2) \\
& \quad +\zeta_2(s_3+2,1,s_1+1;A_2) + \zeta_2(s_1+1,s_3+2,1;A_2) \\
& \quad -\zeta_2(s_1+1,2,s_3+1;A_2) - \zeta_2(s_3+1,2,s_1+1;A_2) \\
& \quad +\zeta_2(s_1+1,s_3+1,2;A_2)\bigg\} +\zeta(s_1+s_3+4)\\
& \quad -\frac{\pi^2}{3}\zeta(s_1+s_3+2).
\end{align*}
If we set $(s_1,s_3)=(1,1)$, we see that 
$$\{ \zeta_2(k,l,m;A_2)\mid k,l,m\in \mathbb{N}\ \text{with}\ k+l+m=6\}$$
appear on the right-hand side. Using the partial fraction decomposition formula
\begin{align*}
\frac{1}{X^pY^q}=& \sum_{i=0}^{p-1}\binom{q-1+i}{i}\frac{1}{X^{p-i}(X+Y)^{q+i}}\\
& +\sum_{i=0}^{q-1}\binom{p-1+i}{i}\frac{1}{Y^{q-i}(X+Y)^{p+i}}\quad (p,q\in \mathbb{N}),
\end{align*}
we obtain
\begin{align*}
\zeta_2(k,l,m;A_2)=& \sum_{i=0}^{k-1}\binom{l-1+i}{i}\zeta_{EZ,2}(k-i,l+m+i)\\
& +\sum_{i=0}^{l-1}\binom{k-1+i}{i}\zeta_{EZ,2}(l-i,k+m+i)
\end{align*}
(see Huard et al \cite[(1.6)]{HWZ}). 
It is well-known that $\zeta_{EZ,2}(p,q)$ $(p,q\in \mathbb{N},\ q\geq 2,\ p+q\leq 6)$ can be expressed in terms of $\{\zeta(m) \mid 2\leq m\leq 6\}$, by using EZ-Face, 
an interface for evaluation of Euler sums (see \cite{EZ-face}). Using these results, we can consequently obtain
\begin{align}
  2\,\zeta_3(1,1,1,1,1,1;A_3)&=4\zeta(3)^2-\frac{31}{5670}\pi^6.\label{A3-value}
\end{align}
We emphasize that this formula is a new result. 
It cannot be deduced from the known functional relation for type $A_3$ \cite[Theorem 9]{KMTPalanga}.

Here we give a comment on the $D_3$ case. Although generally we consider $r\geq 4$ for $D_r$ cases, the generating function \eqref{Dr-gene-F} is valid even when $r=3$.
In this case,
we see that the Dynkin diagram \eqref{diagram_D3} reduces to
\begin{equation}
\setlength{\unitlength}{1pt}\begin{picture}(0,0)
\put(48,2){\oval(30,40)}
\end{picture}
\VN{\alpha_1}\EDVN{\E}{\E}{\alpha_{2}}{\alpha_{3}}
\end{equation}
and this coincides with \eqref{diagram_A3}.
In fact, if we read $t_{13},t_{14},t_{23},t_{24}$, $y_1,y_2,y_3$, $m_1,m_3$ as $t_{-3},t_{+2},t_{-2},t_{+3}$, $y_2,y_1,y_3$, $m_2,m_3$ respectively in \eqref{GF:A3}, then we obtain the generating function for $D_3$ which comes from \eqref{Dr-gene-F}.

\section{Another type of functional relation for $\zeta_2({\bf s};C_2)$}\label{sec-6}

In this section, 
we give another type of functional relation for the 
zeta-function of type $C_2$ defined by 
\begin{equation}
\zeta_2(s_1,s_2,s_3,s_4;C_2)=\sum_{m,n=1}^\infty \frac{1}{m^{s_1}n^{s_2}(m+n)^{s_3}(m+2n)^{s_4}}, \label{def_C_2}
\end{equation}
which absolutely converges in the region $\Re s_j\geq 1$ ($j=1,3,4$) and $\Re s_2\geq 0$.

We first summarize the progress of research on this function. 

As stated in Section \ref{sec-1}, the second author defined \eqref{def_C_2} inspired by Zagier's observation on Witten's zeta-function in \cite{Zagier}, and showed its analytic continuation in \cite{MatBonn}. Based on this result, the third author evaluated \eqref{def_C_2} at any positive integer point $(s_1,s_2,s_3,s_4)=(k_1,k_2,k_3,k_4)$ where $k_1+k_2+k_3+k_4$ is odd (see \cite{Tsu04}). 
Furthermore, we gave general forms of functional relations for zeta-functions of root systems, and explicit examples for them including \eqref{def_C_2} (see \cite[Section 3]{KMTKyushu}, \cite[Section 3]{KMTPJA}). 
More explicit expressions for functional relations for $\zeta_2({\bf s};C_2)$ were given by Nakamura \cite{Nak08} and the authors \cite{KMTNicchuu}. As for their character analogues, see \cite{KMTLondon}. 

It is emphasized that these functional relations include Witten's volume formulas which imply $\zeta_2(2k,2k,2k,2k;C_2)\in \mathbb{Q}\cdot \pi^{8k}$ $(k\in \mathbb{N})$. On the other hand, these give no information on $\zeta_2(2k-1,2k-1,2k-1,2k-1;C_2)$ $(k\in \mathbb{N})$, because in these cases, the functional relations vanish. We here give a new type of functional relations between $\zeta_2({\bf s};C_2)$ and double zeta-functions of Euler--Zagier type, and evaluate $\zeta_2({\bf s};C_2)$ at positive integer points from the known result on double zeta values. This result especially gives an explicit expression formula for $\zeta_2(1,1,1,1;C_2)$ in terms of $\zeta(s)$ and polylogarithms (see \eqref{eq-6-4} below).

The result in this section is given by considering partial fraction decompositions and partial summations of harmonic sums, and the method here is totally different from that stated in the preceding sections. The main technique is similar to that used in \cite{Tsu04}
(and methodologically some common feature with \cite{ZBC,IM}, though
they studied the case of type $A_r$).

Let 
\begin{align*}
& \phi(s)=\sum_{m=1}^\infty \frac{(-1)^m}{m^s}=(2^{1-s}-1)\zeta(s),\\
& \zeta_{EZ,2}(s_1,s_2;\sigma_1,\sigma_2)=\sum_{1\leq m<n}\frac{\sigma_1^m~\sigma_2^n}{m^{s_1}n^{s_2}}\quad (\sigma_1,\sigma_2\in \{\pm 1\}).
\end{align*}
Note that $\zeta_{EZ,2}(s_1,s_2;1,1)=\zeta_{EZ,2}(s_1,s_2)$. We see that the function $\zeta_{EZ,2}(s_1,s_2;\sigma_1,\sigma_2)$ can be continued meromorphically
to the whole space $\mathbb{C}^2$.    In fact, this function is a special case of
twisted double zeta-functions, whose analytic properties have been studied by many
authors.     When $(\sigma_1,\sigma_2)=(-1,-1)$, the proof of the continuation is
included in the results of Akiyama and Ishikawa \cite{AkiIshi} or of de Crisenoy
\cite{dC}, and when $(\sigma_1,\sigma_2)=(\pm 1, \mp 1)$, it is included in
\cite{Kom} or \cite{KMTLondon}.

By considering partial fraction decompositions and the partial summation, we prove the following result. 

\ 

\begin{prop}\label{FR:C2}
For $a,b,c\in \mathbb{N}$ and $s\in \mathbb{C}$,
\begin{align*}
\zeta_2&(a,s,b,c;C_2) \\
=& \sum_{\ell=0}^{b-1}\binom{c-1+\ell}{\ell}(-1)^\ell \\
& \quad \times \bigg\{(-1)^a\sum_{j=0}^{b-\ell-2}\binom{a-1+j}{j}\zeta_{EZ,2}(s+a+c+\ell+j,b-\ell-j)\\
& \quad\   +\sum_{j=0}^{a-2}\binom{b-\ell-1+j}{j}(-1)^j\zeta(s+b+c+j)\zeta(a-j)\\
& \quad\  -(-1)^a\binom{a+b-\ell-2}{b-\ell-1}\\
& \quad\  \ \times \left\{\zeta_{EZ,2}(1,s+a+b+c-1)+\zeta(s+a+b+c)\right\}\bigg\}\\
+& (-1)^b\sum_{\ell=0}^{c-1}\binom{b-1+\ell}{\ell}2^{s+b+\ell-1} \\
& \quad \times \bigg\{(-1)^a\sum_{j=0}^{c-\ell-2}\binom{a-1+j}{j}\zeta_{EZ,2}(s+a+b+\ell+j,c-\ell-j)\\
& \quad\   +\sum_{j=0}^{a-2}\binom{c-\ell-1+j}{j}(-1)^j\zeta(s+b+c+j)\zeta(a-j)\\
& \quad\  -(-1)^a\binom{a+c-\ell-2}{c-\ell-1}\\
& \quad\  \ \times \left\{\zeta_{EZ,2}(1,s+a+b+c-1)+\zeta(s+a+b+c)\right\}\bigg\}\\
+& (-1)^b\sum_{\ell=0}^{c-1}\binom{b-1+\ell}{\ell}2^{s+b+\ell-1} \\
& \quad \times \bigg\{(-1)^a\sum_{j=0}^{c-\ell-2}\binom{a-1+j}{j}\\
& \qquad\qquad\times\zeta_{EZ,2}(s+a+b+\ell+j,c-\ell-j;-1,1)\\
& \quad\   +\sum_{j=0}^{a-2}\binom{c-\ell-1+j}{j}(-1)^j\phi(s+b+c+j)\zeta(a-j)\\
& \quad\  -(-1)^a\binom{a+c-\ell-2}{c-\ell-1}\\
& \quad\  \ \times \left\{\zeta_{EZ,2}(1,s+a+b+c-1;1,-1)+\phi(s+a+b+c)\right\}\bigg\}.
\end{align*}
\end{prop}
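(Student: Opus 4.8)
The plan is to start from the defining series
\[
\zeta_2(a,s,b,c;C_2)=\sum_{m,n\geq 1}\frac{1}{m^a n^s (m+n)^b (m+2n)^c},
\]
and to eliminate the two "diagonal" denominators $(m+n)^b$ and $(m+2n)^c$ step by step by partial fraction decomposition, always expanding with respect to the variable $m$ (treating $n$ as fixed). First I would apply the stated partial fraction identity with $X=m$ and $Y=n$, or more precisely in two stages: decompose $\dfrac{1}{(m+n)^b(m+2n)^c}$ by viewing it as $\dfrac{1}{X^b Y^c}$ with $X=m+n$, $Y=n$ (so that $X+Y=m+2n$), which produces a sum of terms of the shape $\dfrac{1}{(m+n)^{b-\ell}(m+2n)^{?}}$ with binomial coefficients $\binom{c-1+\ell}{\ell}$ and $\binom{b-1+\ell}{\ell}$, and a sign $(-1)^\ell$ coming from $Y=n$ versus $-Y$ conventions; this is the origin of the two outer $\ell$-sums in the statement and of the factor $2^{s+b+\ell-1}$ (which will appear after rescaling $n\mapsto$ handled via $m+2n$). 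After this first decomposition each resulting term has only one "diagonal" factor left, namely a power of $(m+n)$ or of $(m+2n)$, multiplied by $m^{-a}n^{-s}$.

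Next, for each such term I would run the partial fraction identity a second time, now on $\dfrac{1}{m^a\,(m+n)^p}$ (respectively $\dfrac{1}{m^a\,(m+2n)^p}$), with $X=m$, $Y=n$ (resp. $Y=2n$). This splits into two families: terms with a surviving power of $m$ in the numerator denominator — these recombine, after summing over $m<n$ and $m\ge n$ appropriately, into Euler--Zagier double zeta-values $\zeta_{EZ,2}(\cdots)$ and, in the boundary case $i=q-1$ of the identity, into the "diagonal" pieces $\zeta_{EZ,2}(1,\cdot)+\zeta(\cdot)$ (the $+\zeta$ accounting for the $m=n$ term that the strict inequality in $\zeta_{EZ,2}$ omits); and terms with a surviving power of $n$ only — these decouple as products $\zeta(a-j)\zeta(\cdot)$ (or, in the $m+2n$ branch, as $\zeta(a-j)\phi(\cdot)$, since summing $(-1)^m$-type residues against $n$ produces the alternating $\phi$). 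The appearance of $\zeta_{EZ,2}(\cdot,\cdot;-1,1)$ and $\zeta_{EZ,2}(1,\cdot;1,-1)$ in the third block is exactly the trace of the $(m+2n)$ factor: writing $\frac1{(m+2n)^p}$ and then separating the residue at $m\equiv 0$ versus $m\equiv n \pmod 2$ forces the twist $\sigma_1=-1$ on the inner summation variable. The condition $a,b,c\in\mathbb N$ guarantees all the finite $j$-sums terminate, and the condition hidden in absolute convergence ($\Re s$ large first, then meromorphic continuation) lets me manipulate the series termwise before continuing $s$ to all of $\mathbb C$ by the known analytic continuation of $\zeta_2(\cdot;C_2)$ and of the twisted double zeta-functions recalled just above the Proposition.

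The bookkeeping — keeping track of which binomial coefficient, which sign $(-1)^\ell$, $(-1)^j$, $(-1)^a$ or $(-1)^b$, and which power of $2$ attaches to each term — is the main obstacle; in particular, the factor $2^{s+b+\ell-1}$ in the last two blocks comes from substituting the variable so that $m+2n$ becomes the "new $m+n$", which rescales $n$ by $2$ and hence $n^{-s}$ by $2^{-s}$, but combined with the powers of $(m+2n)$ produced in the first step and a shift in $b$; getting these exponents to match the claimed form requires care. The other delicate point is the systematic treatment of the boundary terms of each partial fraction expansion (the $i=p-1$ or $j=q-1$ summands), which are precisely what generate the $\binom{a+b-\ell-2}{b-\ell-1}$ and $\binom{a+c-\ell-2}{c-\ell-1}$ coefficients together with the $\{\zeta_{EZ,2}(1,\cdot)+\zeta(\cdot)\}$ and $\{\zeta_{EZ,2}(1,\cdot;1,-1)+\phi(\cdot)\}$ packages; one must be careful that the $m=n$ (and $m=2n$-type) contributions are added back exactly once. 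Apart from these two sources of sign/coefficient errors, the argument is a finite, mechanical iteration of the single displayed partial fraction formula followed by Abel summation to recognize the harmonic sums, so once the first two or three terms are checked against the right-hand side, the general pattern follows by induction on $b$ and $c$.
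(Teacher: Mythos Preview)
Your plan is essentially the paper's proof: apply the partial fraction identity to $\frac{1}{(m+n)^b(m+2n)^c}$ with $(X,Y,p,q)=(m+n,n,b,c)$ to produce the two outer $\ell$-sums, then a second ``incomplete'' partial fraction step on $\frac{1}{m^a(m+n)^{p}}$ for each piece, with the combined boundary summand $\frac{1}{m\,n^{\bullet}(m+n)}$ evaluated by the telescoping $\sum_{m\ge1}\bigl(\frac{1}{m}-\frac{1}{m+n}\bigr)=\sum_{1\le m\le n}\frac{1}{m}$ to give $\zeta_{EZ,2}(1,\cdot)+\zeta(\cdot)$. One point in your write-up is garbled and worth fixing before you carry out the bookkeeping: in the $(m+2n)$-branch the paper does \emph{not} run the second partial fraction with $Y=2n$ or sort by a congruence on $m$; instead it first rewrites $n^{-(s+b+\ell)}=2^{s+b+\ell}(2n)^{-(s+b+\ell)}$ and uses $\sum_{n\ge1}g(2n)=\tfrac12\sum_{n\ge1}(1+(-1)^n)g(n)$, so that the factor $2^{s+b+\ell-1}$ appears cleanly and the twist $(-1)^n$ sits on what becomes the inner variable of the Euler--Zagier sum (hence $\sigma_1=-1$), after which the second partial fraction is applied with $Y=n$ exactly as in the $I_1$ branch.
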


\begin{proof}
First we recall the partial fraction decomposition formula 
\begin{align}
\frac{1}{X^p(X+Y)^q}& =\sum_{i=0}^{p-1}\binom{q-1+i}{i}\frac{(-1)^i}{Y^{q+i}X^{p-i}} \label{PFD}\\
&\quad +(-1)^p\sum_{i=0}^{q-1}\binom{p-1+i}{i}\frac{1}{Y^{p+i}(X+Y)^{q-i}} \notag
\end{align}
for $p,q\in\mathbb{N}$
(see, for example, \cite[Lemma 1]{Borwein}). Furthermore, we consider the following 
``incomplete'' version of \eqref{PFD}, that is,
\begin{align}
\frac{1}{X^p(X+Y)^q}& =\sum_{i=0}^{p-2}\binom{q-1+i}{i}\frac{(-1)^i}{Y^{q+i}X^{p-i}} \label{PFD2}\\
& \quad +(-1)^{p-1}\binom{p+q-2}{p-1}\frac{1}{XY^{p+q-2}(X+Y)} \notag\\
&\quad +(-1)^p\sum_{i=0}^{q-2}\binom{p-1+i}{i}\frac{1}{Y^{p+i}(X+Y)^{q-i}}, \notag
\end{align}
where the empty sum is interpreted as $0$. 

For $a,b,c\in \mathbb{N}$ and $s\in \mathbb{C}$, using \eqref{PFD} with $(X,Y,p,q)=(m+n,n,b,c)$, we have
\begin{align}\label{prim_fq}
 \zeta_2& (a,s,b,c;C_2) =\sum_{i=0}^{b-1}\binom{c-1+i}{i}(-1)^i\sum_{m,n=1}^\infty  \frac{1}{m^a n^{s+c+i}(m+n)^{b-i}}\\
& \quad +(-1)^b \sum_{i=0}^{c-1}\binom{b-1+i}{i}\sum_{m,n=1}^\infty \frac{1}{m^a n^{s+b+i}(m+2n)^{c-i}},
\end{align}
where we denote the right-hand side by $I_{1}+I_{2}$. 
Applying \eqref{PFD2} with $(X,Y,p,q)=(m,n,a,b-i)$ to $I_1$, we have
\begin{align*}
I_{1}
&=\sum_{i=0}^{b-1}\binom{c-1+i}{i}(-1)^i\\
& \quad \times \bigg\{ (-1)^a\sum_{j=0}^{b-i-2}\binom{a-1+j}{j}\zeta_{EZ,2}(s+a+c+i+j,b-i-j)\\
& \qquad +(-1)^{a-1}\binom{a+b-i-2}{b-i-1}\sum_{m,n=1}^\infty \frac{1}{mn^{s+a+b+c-2}(m+n)}\\
& \qquad +\sum_{j=0}^{a-2}\binom{b-i-1+j}{j}(-1)^j\zeta(s+b+c+j)\zeta(a-j)\bigg\},
\end{align*}
and the second sum in the curly parentheses can be evaluated as
\begin{align}\label{choitokoitumo}
& \sum_{m,n=1}^\infty \frac{1}{mn^{s+a+b+c-2}(m+n)}\\
& \quad =\sum_{n=1}^\infty \frac{1}{n^{s+a+b+c-1}}\sum_{m=1}^\infty \left(\frac{1}{m}-\frac{1}{m+n}\right)\notag\\
& \quad =\sum_{\substack{m,n\\1\leq m\leq n}} \frac{1}{mn^{s+a+b+c-1}}\notag \\
& \quad = \zeta_{EZ,2}(1,s+a+b+c-1)+\zeta(s+a+b+c).\notag
\end{align}
As for $I_2$, corresponding to the decomposition
\begin{align*}
& \sum_{m,n=1}^\infty\frac{1}{m^a n^{s+b+i}(m+2n)^{c-i}}\\
& \quad =2^{s+b+i}\sum_{m,n=1}^\infty\frac{1}{m^a (2n)^{s+b+i}(m+2n)^{c-i}}\\
& \quad =2^{s+b+i-1}\sum_{m,n=1}^\infty\frac{1}{m^a n^{s+b+i}(m+n)^{c-i}}\\
& \qquad +2^{s+b+i-1}\sum_{m,n=1}^\infty\frac{(-1)^n}{m^a n^{s+b+i}(m+n)^{c-i}},
\end{align*}
we write $I_2=I_{21}+I_{22}$. Applying \eqref{PFD2} with $(X,Y,p,q)=(m,n,a,c-i)$ to $I_{21}$, we have
\begin{align*}
I_{21}&=(-1)^b\sum_{i=0}^{c-1}\binom{b-1+i}{i}2^{s+b+i-1}\sum_{m,n=1}^\infty \frac{1}{m^a n^{s+b+i}(m+n)^{c-i}}\\
&=(-1)^b\sum_{i=0}^{c-1}\binom{b-1+i}{i}2^{s+b+i-1}\\
& \quad \times \bigg\{ (-1)^a\sum_{j=0}^{c-i-2}\binom{a-1+j}{j}\zeta_{EZ,2}(s+a+b+i+j,c-i-j)\\
& \qquad +(-1)^{a-1}\binom{a+c-i-2}{c-i-1} \sum_{m,n=1}^\infty \frac{1}{mn^{s+a+b+c-2}(m+n)}\\
& \qquad +\sum_{j=0}^{a-2}\binom{c-i-1+j}{j}(-1)^j\zeta(s+b+c+j)\zeta(a-j)\bigg\}\\
&=(-1)^b\sum_{i=0}^{c-1}\binom{b-1+i}{i}2^{s+b+i-1}\\
& \quad \times \bigg\{ (-1)^a\sum_{j=0}^{c-i-2}\binom{a-1+j}{j}\zeta_{EZ,2}(s+a+b+i+j,c-i-j)\\
& \qquad +(-1)^{a-1}\binom{a+c-i-2}{c-i-1}\\
& \qquad \qquad\times\{\zeta_{EZ,2}(1,s+a+b+c-1)+\zeta(s+a+b+c)\}\\
& \qquad +\sum_{j=0}^{a-2}\binom{c-i-1+j}{j}(-1)^j\zeta(s+b+c+j)\zeta(a-j)\bigg\},
\end{align*}
where the last equality follows by using \eqref{choitokoitumo}.
Similarly we obtain
\begin{align*}
& I_{22}=(-1)^b\sum_{i=0}^{c-1}\binom{b-1+i}{i}2^{s+b+i-1}\\
& \qquad \times \bigg\{ (-1)^a\sum_{j=0}^{c-i-2}\binom{a-1+j}{j}\\
& \qquad\qquad\quad\times\zeta_{EZ,2}(s+a+b+i+j,c-i-j;-1,1)\\
& \qquad\qquad +(-1)^{a-1}\binom{a+c-i-2}{c-i-1}\zeta_{EZ,2}\\
& \qquad\qquad\quad  \times\{(1,s+a+b+c-1;1,-1)+\phi(s+a+b+c)\}\\
& \qquad\qquad +\sum_{j=0}^{a-2}\binom{c-i-1+j}{j}(-1)^j\phi(s+b+c+j)\zeta(a-j)\bigg\}.
\end{align*}
Combining these results, we consequently obtain the assertion.
\end{proof}

\begin{remark}
Partial fraction decompositions are very useful tool of finding functional relations.
For example, \eqref{prim_fq} is a very simple consequence of partial fraction decomposition,
but this formula is already a functional relation among the zeta-functions of
types $C_2$ and $A_2$.    (Note that \eqref{prim_fq} first appeared in \cite[(10)]{Tsu04}.)
Generally speaking, any double shuffle relations and associated functional equations
can be shown by using partial fraction decompositions (see \cite{KMTMathZ}).
\end{remark}
\ 

\begin{example}
Setting $(a,b,c)=(1,1,1)$ $(1,2,1)$ and $(3,3,3)$ in Proposition \ref{FR:C2}, we obtain
\begin{align}
& \zeta_2(1,s,1,1;C_2)=(1-2^s)\{\zeta_{EZ,2}(1,s+2)+\zeta(s+3)\}\label{eq-6-1}\\
& \qquad\qquad\qquad\qquad  -2^s\{\zeta_{EZ,2}(1,s+2;1,-1)+\phi(s+3)\},\notag\\
& \zeta_2(1,s,2,1;C_2)=2^{s+1}\{\zeta_{EZ,2}(1,s+3)+\zeta(s+4)\}\label{eq-6-2}\\
& \qquad\qquad\qquad\qquad +2^{s+1}\{\zeta_{EZ,2}(1,s+3;1,-1)+\phi(s+4)\} \notag\\
& \qquad\qquad\qquad\qquad -\zeta_{EZ,2}(s+2,2), \notag\\
& \zeta_2(3,s,3,3;C_2)\label{eq-6-3}\\
& \ =(19\cdot 2^{s+2}+8)\zeta_{EZ,2}(s+7,2) + (2^{s+2} - 1)\zeta_{EZ,2}(s+6,3) \notag\\
& \quad + 3(1-2^{s+6})\{\zeta_{EZ,2}(1,s+8)+\zeta(s+9)\}\notag\\
& \quad + (2^{s+2}\cdot 39+3)\zeta(2)\zeta(s+7)\notag\\
& \quad + (4-2^{s+2}\cdot 31)\zeta(3)\zeta(s+6) \notag\\
& \quad -192\{\zeta_{EZ,2}(1,s+8;1;-1)+\phi(s+9)\}+ 156\zeta(2)\phi(s+7)\notag\\
& \quad  - 124\zeta(3)\phi(s+6) \notag\\
& \quad + 2^{s+2}\{19 \zeta_{EZ,2}(s+7,2;-1,1) +\zeta_{EZ,2}(s+6,3;-1,1)\}. \notag
\end{align}
In particular, setting $s=1$ in \eqref{eq-6-1} and $s=0$ in \eqref{eq-6-2}, we have
\begin{align*}
&\zeta_2(1,1,1,1;C_2)=-2\zeta_{EZ,2}(1,3;1,-1)-\frac{1}{2}\zeta(2)^2+\frac{7}{4}\zeta(4),\\
&\zeta_2(1,0,2,1;C_2)=2\zeta_{EZ,2}(1,3;1,-1).
\end{align*}
Here we recall 
\begin{align*}
\zeta_{EZ,2}(1,3;1,-1)& =2{\rm Li}_4\left(\frac{1}{2}\right)+\frac{1}{12}(\log 2)^4-\frac{15}{8}\zeta(4)\\
& \quad +\frac{7}{4}\zeta(3)\log 2-\frac{1}{2}\zeta(2) (\log 2)^2,
\end{align*}
where ${\rm Li}_4(\cdot)$ denotes the polylogarithm of order $4$
(see \cite[Section 4]{Borwein}). Therefore we obtain
\begin{align}
\zeta_2(1,1,1,1;C_2) &=\frac{17}{10}\zeta(2)^2 - 4 {\rm Li}_4\left(\frac{1}{2}\right) - \frac{7}{2}\zeta(3) \log 2 \label{eq-6-4}\\
& \quad + \zeta(2) (\log 2)^2 -\frac{1}{6} (\log 2)^4, \notag\\
\zeta_2(1,0,2,1;C_2) &=-\frac{3}{2}\zeta(2)^2 + 4 {\rm Li}_4\left(\frac{1}{2}\right) + \frac{7}{2}\zeta(3) \log 2 \label{eq-6-4-2}\\
& \quad - \zeta(2) (\log 2)^2 +\frac{1}{6} (\log 2)^4. \notag
\end{align}
Similarly we can express, for example, 
$$\zeta_2(1,2,2,1;C_2)\quad \text{\rm and}\quad \zeta_2(3,3,3,3;C_2)$$
in terms of values of $\zeta(s)$ and $\zeta_{EZ,2}(s_1,s_2;\sigma_1,\sigma_2)$. However, it is unclear whether we can express these values in terms of single series like \eqref{eq-6-4} and \eqref{eq-6-4-2}. 
\end{example}

\

\end{document}